\newcommand{\R}{\mathbb R}
\newcommand{\F}{\mathcal F}
\newcommand{\T}{^\mathsf{T}}
\newcommand{\E}{\mathbb E}
\newcommand{\inv}{^{-1}}
\newcommand{\e}{\text{e}}
\newcommand{\trace}{\mathrm{trace}}
\newcommand{\rme}{\mathrm{e}}
\newcommand{\rmi}{\mathrm{i}}
\newcommand{\ord}{ r}
\newcommand{\rmd}{\mathrm{d}}
\newcommand{\tr}{\mathrm{trace}}
\newtheorem{theorem}{Theorem}
\newtheorem{lemma}{Lemma}
\newtheorem{prop}{Proposition}
\newtheorem{definition}{Definition}
\def\toby{\color{black}}
\def\TLS{\color{black}}
\title{Effective New Methods for Automated Parameter Selection in Regularized Inverse Problems}
\author{Toby Sanders, Rodrigo B. Platte, Robert D. Skeel \\ \small{School of Mathematical and Statistical Sciences, Arizona State University, Tempe, AZ, USA.} }
\date{}
\begin{document}
\maketitle

\begin{abstract}
The choice of the parameter value for regularized inverse problems is critical to the results and remains a topic of interest. This article explores a criterion for selecting a good parameter value by maximizing the probability of the data, {{with no prior knowledge of the noise variance}}.  These concepts are developed for $\ell_2$ and consequently $\ell_1$ regularization models by way of their Bayesian interpretations.  
Based on these concepts, an iterative scheme is proposed and demonstrated to converge accurately, and analytical convergence results are provided that substantiate these empirical observations.  For some of the most common inverse problems, including MRI, SAR, denoising, and deconvolution, an extremely efficient algorithm is derived, making the iterative scheme very attractive for real case use.  The computational concerns associated with the general case for any inverse problem are also carefully addressed.  A robust set of 1D and 2D numerical simulations confirm the effectiveness of the proposed approach.
\end{abstract}

\section{Introduction}  
Image and signal denoising and reconstruction problems are important research topics due to their wide range of applications, including medical diagnosis, defense, and basic scientific research \cite{lustig2007sparse,1257394,4587391,bhattacharya2007fast,horowitz1994three,leng2010cryogenic}.  These problems arise when an object or image, which we denote by $u$, cannot easily be observed in a straightforward manner, e.g. brain imaging.  In a linear model, when $u$ must be measured in a indirect fashion, measurements of $u$ are encoded into a data vector of the form $b = Au + \epsilon$, where $A$ is a linear operator and $\epsilon$ is a noise term inherent to the sensing mechanism of the application.  Then the image reconstruction problem is to recover or \emph{decode} the most accurate representation of $u$ given $A\in \R^{m\times n}$ and $b$, and any additional prior information.  {The topic of this article is the proper choice of the regularization parameter in the reconstruction model which balances the data fitting with the priors.}

Inverse problems are typically characterized as ill-posed, resulting in solution maps that are sensitive to the noise term.  To alleviate this issue, it is common to implement regularization techniques that promote favorable solutions based on prior knowledge of the behavior of the target signal.  
For example, in the continuous formulation, an order 1 Tikhonov regularization scheme sets $R(u) = \int_{\Omega} |\nabla u(x)|^2  \, dx $, and the model minimizes a weighted sum of $R(u)$ with $\| Au - b\|_2^2$.  This formulation intuitively recovers \emph{smooth} solutions with small variation \cite{tikhonov2013numerical}.   
The regularized solutions considered in this article takes the form
\begin{equation}\label{reg-l1}
u_\lambda = \arg \min_{u\in \R^n} \| Au-b \|_2^2 + \lambda \| Tu \|_p^p,
\end{equation}
for $p=1,2$.  The main focus of this article is the choice of the important parameter $\lambda>0$ that balances the regularization with the data fitting term.

In (\ref{reg-l1}), the case $p=2$ {\TLS{yields the discretized model}} for Tikhonov regularization, and finite difference matrices are often used for $T$ to approximate derivatives \cite{ROF,TGV,SGP-ET}.  The case $p=1$ is typically referred to as the \emph{compressed sensing} formulation when $m<n$ \cite{candes2006robust,CSincoherence}.  While the Tikhonov case is less computationally expensive using conjugate gradient (CG) methods, it is well documented that in many applications the $\ell_1$ regularized solutions can be superior, hence they are also of interest in the current work.

{\TLS{Generally, the main benefit of working with Tikhonov regularization is computational.  For the most challenging problems where the best possible results are necessary, more computationally intensive techniques can yield better results by using for example \emph{learning-based} priors \cite{romano2017little,zhang2017learning,sun2019online} or $\ell_1$ regularization models as indicated in some of our results.  Nonetheless, Tikhonov regularization remains the regularizer of choice for a number of problems with sufficient data but where a fast and reliable reconstruction is needed.  For example, it was recently studied in the case of rapid blind deconvolution of low noise images \cite{xue2012sure,xue2014novel} and for real time processing of very large data sets \cite{slagel2019sampled,buccini2017regularizing,buccini2017iterated}.  Moreover, even with the most recent wave of \emph{learning-based} prior models, the alternating direction method of multipliers (ADMM) approach most often used as the computational tool to optimize these models involves solving intermediate Tikhonov regularization problems \cite{boyd2011distributed,romano2017little,venkatakrishnan2013plug}.  Hence, the work developed here could potentially be leveraged to improve these algorithms, e.g. in the way we leverage our approach for novel $\ell_1$ regularized parameter selection.
}}

In general, the parameter $\lambda$ should be larger for smaller signal-to-noise ratio (SNR) in $b$, and vice versa.  The SNR is unknown for most applications, and even when it is this does not immediately inform us what value $\lambda$ should take.  Perhaps most commonly researchers choose $\lambda$ based on ``experience."  While this approach often leads to suitably pleasing results, having a robust automated approach eliminates any user bias, provides more broadly applicable results, and saves time for the users.

This article proposes a new criterion and procedure for the parameter selection, {{which uses the equivalent maximum a posteriori (MAP) interpretation of regularized inverse problems for a Bayesian approach}} \cite{kaipio2006statistical}.  Using the MAP interpretation of (\ref{reg-l1}), we consider the parameter $\lambda$ to be optimal when providing maximum evidence (ME), i.e. it maximizes the likelihood of the data, $b$, which is related to a marginalized maximum likelihood (ML) estimation \cite{Bishop}.  {The main contributions of this article, which are expanded upon further below, may be summarized by the following:
\begin{enumerate}
\item Proof of concept of the ME criterion for noise estimation and effective parameter selection, showing comparable results to the leading methods where the noise level is assumed known.
\item Algorithms and computational methods for very efficient implementation, particularly for important applications such as MRI and deconvolution, and a novel nonlinear iteration scheme for finding the parameter that is robust and efficient

\item Extension of the recovered $\ell_2$ parameter value to obtain an accurate $\ell_1$ parameter, which is optimal as $m\rightarrow \infty$.
\end{enumerate}}

{In light of the proposed ME criterion, several theoretical results are first developed that are necessary for the algorithm.}  Then, based on these results, we propose a fixed point iterative scheme that updates two parameters, $\sigma$ and $\eta$, that together determine $\lambda$.  
The value $\sigma^2$ is the variance on the random noise vector $\epsilon \sim N(0, \sigma^2)$, which, {{contrary to most parameter selection methods}}, we assume to have no prior knowledge of.  The parameter $\eta$ is related to the \emph{regularity} or scale of the solution, and we call $\eta^2$ the variance of the signal.  This iterative scheme is developed for the Tikhonov regularized problem and is shown to converge accurately in relatively few iterations (e.g. 5 or 10). 

Revisiting the Bayesian formulation, we show how the recovered parameters $\sigma$ and $\eta$ for $\ell_2$ regularization {{give very good estimates for the $\ell_1$ regularization parameter}}, which is more appealing for many applications.  This contribution, while simple in derivation, is shown to be very effective, and we believe to be one of the critical pieces of this work.  Finally some analysis of the convergence of the proposed fixed point scheme is given, which indicates there are typically two and possibly more nontrivial parameters $\lambda$ satisfying the equations derived for the ME parameter.  However, this analysis also indicates that when the appropriate regularization operator $T$ is used, the only stable parameter (and hence the one found with the proposed algorithm) is indeed the ME parameter, which converges for a very large range of starting values.

There are several computational considerations to address for our approach.  In particular, each iteration requires the trace of a matrix which is infeasible to compute directly for large imaging problems.   However, for many important inverse problems including image denoising, deconvolution, and image reconstruction from Fourier data (e.g. SAR, MRI, and deconvolution), we analytically develop the necessary ingredients that allow us to compute the value of the necessary traces and solution exactly at the cost of essentially just one fast Fourier transform (FFT) resulting in an extremely fast algorithm.  This makes our scheme very appealing and practical for real case use.  The general case of any inverse problem is also addressed, in which the authors implemented trace estimation procedures using random vectors, which is closely related to the trace estimation procedures already used for UPRE. 

{The remainder of this article is organized as follows.  In the subsection below we discuss previous work.  In section \ref{sec: main} and \ref{sec: alg} the necessary background and theoretical developments are provided, which leads to the fixed point iteration for finding the $\ell_2$ regularization parameter.  The general outline of the full computational algorithm is also given, and a few simple examples demonstrate the approach and its potential.  This is followed in \ref{sec: UPRE} and  \ref{sec: tomo} with several additional simulated test problems and comparison with other methods.  In section \ref{sec: L1} we demonstrate how the $\ell_2$ parameters recovered using our algorithm may be used to obtain an accurate estimate for the $\ell_1$ regularization parameter at no computational cost, and simple examples show that this is very effective.  In section \ref{sec: acc} the fast versions of the algorithm are developed for problems such as MRI, SAR, and deconvolution, and they are shown to provide several orders of magnitude in speed up.  To avoid complicating the work, some of the details necessary for this algorithm are provided in the appendix.  The convergence analysis of this algorithm is provided in section \ref{sec: conv} and some of these details are also provided in the appendix.}

\subsection{Previous work}
There have been a number of automated and semi-automated approaches proposed for choosing parameter selection in inverse problems \cite{vogel2002computational}, {{which rely on various different criteria that characterize a \emph{good} parameter}}.  These include the L-curve method \cite{Hansen-l-curve,Calvetti2000423} and generalized cross validation (GCV) \cite{golub1979generalized}.  {{The L-curve method is mainly an empirical method, while GCV provides a reasonable criterion based on having a model where solutions from subsets of data fit complimentary data sets.  There is also the discrepancy principle, which enforces the condition that the regularized solution matches the assumed known noise variance so that $\| Au -b \|_2^2 \approx m \sigma^2$.  This method is not generally preferred due to unfavorable empirical evidence \cite{vogel2002computational,giryes2011projected}, while also requiring accurate knowledge of $\sigma^2$.}} Moreover, these methods typically require one to compute many solutions over a potentially large range of possible parameters in a brute force approach to find the optimal value.  

{{The leading criteria for parameter selection are the unbiased predictive risk estimator (UPRE) \cite{mallows1973some,slagel2019sampled} and Stein's unbiased risk estimator (SURE)\cite{stein1981estimation}, which provide unbiased estimates of a squared error.  The goal then, is to minimize these estimators as functions of $\lambda$.  Denoting the true solution by $v$, UPRE gives an estimate of the predictive error by
\begin{equation}
\E  \|A(u_\lambda - v) \|_2^2  = -m\sigma^2 + \| Au_\lambda - b\|_2^2 + 2\sigma^2 \trace(A B_\lambda),\end{equation}
where $B_\lambda$ is the linear solution map, i.e. $u_\lambda = B_\lambda b$.}} The UPRE method {{has usually been applied}} to Tikhonov regularization \cite{renaut2010regularization}, hence $B_\lambda = (A\T A + \lambda T\T T)\inv A\T$, although it is applicable more generally to problems where the solution depends linearly on the data.  {{The method also assumes i.i.d. Gaussian noise and prior knowledge of the variance $\sigma^2$}}, and the minimization of UPRE typically requires an exhaustive search over $\lambda$ \cite{vogel2002computational}, similar to procedures for the L-curve method.

{{The SURE estimator is similar but more general than UPRE due to its application to nonlinear inverse models.  In fact, the UPRE method just described can be easily derived from SURE, although the pure form of SURE for inverse problems was originally considered for denoising and threshold selection \cite{donoho1995adapting,zhang1998adaptive}. 
The fact that SURE can be applied to nonlinear solutions has generated interest for its application to $\ell_1$ regularization models \cite{zou2007degrees,donoho1995adapting, ramani2012regularization}. More recent developments \cite{eldar2009generalized,giryes2011projected} have generalized SURE (GSURE) to minimize the expectation of $\| P(u_\lambda -v) \|_2^2$, where $P = A\T (A\T A)^{-1} A$ is the projection operator onto the range of $A\T$.  This same work further extends the method to families of exponentials (e.g. non-i.i.d. noise models), which is outside the scope of this work.  As observed in \cite{ramani2012regularization}, UPRE and the projected GSURE are both estimators of particular instances of weighted error norms.  Likewise to UPRE,  the literature on SURE always assumes the noise variance (or covariance matrix) is known \emph{a priori}, contrary to our approach.  For additional detailed discussion and review of these topics, see \cite{ramani2012regularization} and the references therein.}}

{\TLS{More recent work has successfully proposed learning adaptive parameters from a training set \cite{dong2018learning}.  Certainly, developing a training set to learn from is far from ideal for the parameter selection problem if there is a suitable alternative.  The statistical criteria for which we develop an algorithm
are classical methods, whose conceptual basis is easy to explain.
Such methods will be used for a long time
by many researchers, and for comparison purposes by researchers
developing new methods (see e.g. recent examples in \cite{dong2018learning,slagel2019sampled}), and for these reasons we do not consider this approach in this work.}}

\section{Bayesian Formulation and Algorithm for Finding $\lambda$}\label{sec: main}
We begin by writing an equivalent expression for $u_\lambda$ in (\ref{reg-l1}) as
\begin{equation}\label{MAP-main}
\begin{split}
u_\lambda & = \arg\max_u p(u|b) = \arg\max_u p(b|u) p(u) \\
& = \arg \max_u  \text{exp}\left(-\frac{\| Au - b\|_2^2 }{2\sigma^2} \right) \exp\left( - \frac{\lambda \| Tu\|_p^p}{2\sigma^2} \right) .
\end{split}
\end{equation}
We observe this expression as a maximum a posteriori (MAP) formulation from the Bayesian perspective \cite{kaipio2006statistical}, and we write $u_\lambda$ maximizes $ p(u |  b) \propto p(b|u) p(u)$. 
For a noise vector $\epsilon \sim N(0,\sigma^2 I)$, it is clear that 
\begin{equation}\label{likelihood}
 p(b|u,\sigma) = (2\pi \sigma)^{-m/2} \text{exp} \left(-\frac{\| A u - b \|_2^2}{2\sigma^2} \right) .
\end{equation}

For the prior we consider for now $\ell_2$ regularizations with parameter $\eta$, which is related to the \emph{regularity} of the signal, and we call $\eta^2$ the variance of the signal (under the map $T$).  Then our $\ell_2$ Gaussian prior in the case $T$ is nonsingular takes the form
\begin{equation}\label{prior}
 p(u| \eta) = \frac{\det T }{(2\pi \eta^2)^{n/2}} \text{exp} \left (-\frac{\| Tu \|_2^2}{2\eta^2} \right).
\end{equation}
From (\ref{likelihood}) and (\ref{prior}) we see that for a given $\sigma$ and $\eta$, which are generally unknown, we have the MAP formulation as
\begin{equation}\label{reg-bayes}
u_{\sigma, \eta} = \arg \max_u \, \text{exp} \left(-\frac{\| A u - b \|_2^2}{2\sigma^2} \right)
 \text{exp} \left (-\frac{\| Tu \|_2^2}{2\eta^2} \right) ,
\end{equation}
which one may observe is equivalent to the minimization in (\ref{reg-l1}) with $p=2$ and $\lambda = \sigma^2 / \eta^2$.  Numerically, we use the formulation (\ref{reg-l1}) to find $u_\lambda$, however we make use of the equivalent Bayesian formulation (\ref{reg-bayes}) for the analysis in finding good parameters.

Normally the $\ell_2$ prior would take the form
$p(u)\propto\exp(-\|T u\|_2^2/(2\eta^2))$.
However, this is improper in the typical case where $T$ is singular,
since it cannot be normalized.
To fix this (see section 3.4 of \cite{kaipio2006statistical}),
one may use a (nearly) flat prior on the null space of $T$,
in particular, a Gaussian with variance $\alpha$ approaching $+\infty$.
Proceed by writing the singular value decomposition of $T$ as
\[T = U\left[\begin{array}{cc}\Sigma_1 & 0\end{array}\right]
 \left[\begin{array}{c}V_1\T \\ V_2\T\end{array}\right]\]
and using
\[p(u|\eta) = C_2\exp
\left(-\frac{\|T u\|_2^2}{2\eta^2} -\frac{\|V_2\T u\|_2^2}{2\alpha^2}\right),\]
  for which
\[C_2 = \frac{\det\Sigma_1}{(2\pi\eta^2)^{(n-\ord)/2}(2\pi\alpha^2)^{ r/2}}\]
where $n-r$ is the rank of $T$.

\subsection{Iterative Algorithm for Finding the Noise and Signal Variances} \label{sec: alg}
Our goal is to find a good estimate for $\sigma$ and $\eta$ in an efficient manner. We will make use of the Bayesian interpretation of Tikhonov regularization with probability distributions in (\ref{likelihood}) and (\ref{prior}).  The parameters will be considered \emph{good} in a marginalized maximum likelihood sense for maximum evidence (ME), i.e. they maximize $p(b|\sigma, \eta)$.  We first provide the theoretical developments necessary for an ME algorithm.   

\begin{definition}
For a given $\sigma$ and $\eta$, we define $u_{\sigma,\eta}$ to be the MAP solution in (\ref{reg-bayes}), which may be equivalently expressed by the Tikhonov regularized minimizer as
\begin{equation}\label{eq: tik}
 u_{\sigma,\eta} = u_\lambda = H^{-1} A\T b = \arg \min_u \| Au - b\|_2^2 + \lambda \| Tu \|_2^2,
\end{equation}
where $H = A\T A + \lambda T\T T$ and $\lambda = \sigma^2/\eta^2$.
\end{definition}

\begin{lemma}\label{main-lem}
Consider the expectation of an arbitrary function $f(U)$, where $U$ is a random variable with the Gaussian density function in (\ref{prior}).  Then for a given $b$ with conditional expectation given by (\ref{likelihood}), the conditional expectation of $f(U)$ is
\begin{equation}
\E[f(U) \, | \, b] = \E [ f(u_\lambda + \sigma H^{-1/2} X) ],
\end{equation}
where $X \sim N(0 , I)$.
\end{lemma}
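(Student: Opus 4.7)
The plan is a standard Bayesian conjugacy calculation: under the joint Gaussian model the posterior $p(u\mid b)$ is again Gaussian, and once I identify its mean and covariance the lemma follows immediately by rewriting a Gaussian random vector in its standard ``mean plus square root of covariance times white noise'' form.

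First I would use Bayes' rule to write
\[
p(u\mid b) \propto p(b\mid u,\sigma)\,p(u\mid\eta)
\propto \exp\!\left(-\frac{\|Au-b\|_2^2}{2\sigma^2}-\frac{\|Tu\|_2^2}{2\eta^2}\right),
\]
and then expand the two quadratics. Collecting terms in $u$, the quadratic part is $\tfrac{1}{2\sigma^2}u\T(A\T A+\lambda T\T T)u=\tfrac{1}{2\sigma^2}u\T H u$, and the linear part is $-\tfrac{1}{\sigma^2}u\T A\T b$ (with a constant in $u$ coming from $\|b\|_2^2$). Completing the square, the exponent becomes
\[
-\frac{1}{2\sigma^2}(u-H\inv A\T b)\T H\,(u-H\inv A\T b) + \text{const}(u),
\]
so $U\mid b\sim N(u_\lambda,\,\sigma^2 H\inv)$ with $u_\lambda=H\inv A\T b$ exactly as in (\ref{eq: tik}). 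Then, since for any $X\sim N(0,I)$ the random vector $u_\lambda+\sigma H^{-1/2}X$ has mean $u_\lambda$ and covariance $\sigma^2 H\inv$, it has the same distribution as $U\mid b$, and the law-of-the-unconscious-statistician gives $\E[f(U)\mid b]=\E[f(u_\lambda+\sigma H^{-1/2}X)]$.

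The one subtlety, and the main obstacle, is the case when $T$ is singular, because the prior (\ref{prior}) as written is improper. I would handle this by using the regularized prior the authors introduced just before the lemma, with the extra Gaussian factor $\exp(-\|V_2\T u\|_2^2/(2\alpha^2))$ on the null space of $T$. Repeating the above completion of the square with this prior yields $U\mid b\sim N(\tilde u,\,\sigma^2\tilde H\inv)$ with $\tilde H=A\T A+\lambda T\T T+(\sigma^2/\alpha^2)V_2V_2\T$ and $\tilde u=\tilde H\inv A\T b$. Letting $\alpha\to\infty$, $\tilde H\to H$ and $\tilde u\to u_\lambda$, provided $H$ is invertible (which holds whenever $A$ has trivial kernel on $\mathrm{null}(T)$, the standard well-posedness assumption in this setting). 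A dominated-convergence argument, using that $\tilde H^{-1/2}$ is bounded uniformly in $\alpha$ on the relevant range, transfers the identity for each finite $\alpha$ to the claimed identity in the limit.

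Apart from this limit step, the proof is just careful bookkeeping of a Gaussian posterior, so I would keep the exposition short: state the posterior, complete the square, read off the mean and covariance, and invoke the representation $U\mid b=u_\lambda+\sigma H^{-1/2}X$.
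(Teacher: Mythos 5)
Your proposal is correct and follows essentially the same route as the paper's proof: Bayes' rule, completion of the square to identify the Gaussian posterior $N(u_\lambda,\sigma^2 H^{-1})$, and the $\alpha\to\infty$ limit of the regularized prior to handle singular $T$. If anything, you are slightly more careful than the paper in noting that for finite $\alpha$ the posterior mean is $\tilde H^{-1}A\T b$ rather than $u_\lambda$ itself, with convergence only in the limit.
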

\begin{proof}
We will use capital $C$'s to denote constants independent of $u$ that can be absorbed on the outside.  We begin by writing the conditional expectation using Bayes' theorem as
\begin{align*}
\E[f(U) \, | \, b] 
&= \int_{\R^n} f(u) p(b|u,\sigma) p(u|\eta) / p(b) \, \rmd u\\
& = 
 C_1\int f(u)
\exp\left(-\frac1{2\sigma^2}(u\T H_\alpha u - 2 u\T A\T b)\right)\, \rmd u,
\end{align*}
where $H_\alpha = H + (\sigma^2/\alpha^2)V_2 V_2\T$.
Completing the square of the matrix equation in the exponential leads to
\begin{align*}
 \E[f(U)\, | \, b] 
 &= C_2 \int_{\R^n} f(u) \text{exp}\left(\frac{-1}{2\sigma^2} (u-u_{\lambda})\T H_\alpha (u-u_{\lambda}) \right) \, \rmd u\\
 & = C_3 \int_{\R^n} f(u_{\lambda}+\sigma H_\alpha^{-1/2} x) \text{exp}(-x\T x/2) \, \rmd x.
\end{align*}
Because $H$ is symmetric positive definite,
$H_\alpha^{-1/2} = H^{-1/2} +\mathcal{O}(\alpha^{-2})$, and
the limit $\alpha\rightarrow+\infty$ is well defined.
Setting $f(u) = 1$, we observe that $C_3 = (2\pi )^{-n/2}$, which completes the proof.
\end{proof}

\begin{lemma}\label{lem-2}
 Let $b$ be given with conditional density defined by (\ref{likelihood}), and let $U$ be a random variable having a density defined by (\ref{prior}), with $\sigma$ and $\eta$ considered unknown parameters.  Then the values of $\sigma$ and $\eta$ that maximize $p(b)$ satisfy the following conditional expectations:
 \begin{align}
  \sigma^2 &= \frac{1}{m} \E \left[ \|AU - b \|_2^2 \, | \, b\right] \label{eq: E1}  \\
  \eta^2 & = \frac{1}{n} \E \left[ \| TU\|_2^2 \, | \, b\right] \label{eq: E2} .
 \end{align}
\end{lemma}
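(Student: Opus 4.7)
The plan is to directly optimize the marginal likelihood $p(b|\sigma,\eta) = \int p(b|u,\sigma)\, p(u|\eta)\,\rmd u$ by differentiating with respect to $\sigma$ and $\eta$ (equivalently, with respect to $\sigma^2$ and $\eta^2$), and then to recognize the resulting first-order conditions as the stated conditional expectations. Concretely, I would first justify (via dominated convergence) exchanging the derivative with the integral, so that
\[
\frac{\partial}{\partial \sigma}\log p(b|\sigma,\eta)
 = \frac{1}{p(b|\sigma,\eta)}\int \frac{\partial p(b|u,\sigma)}{\partial \sigma}\, p(u|\eta)\,\rmd u
 = \E\!\left[\left.\frac{\partial}{\partial \sigma}\log p(b|U,\sigma)\,\right|\,b\right],
\]
and analogously for $\eta$. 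The key observation driving both identities is that the integrand, after dividing by $p(b|\sigma,\eta)$, is exactly the posterior density $p(u|b,\sigma,\eta)$, so each stationarity condition is literally a conditional expectation taken with respect to the posterior.

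Second, I would compute the two log-derivatives explicitly from (\ref{likelihood}) and (\ref{prior}):
\[
\frac{\partial}{\partial \sigma}\log p(b|u,\sigma) = -\frac{m}{\sigma} + \frac{\|Au-b\|_2^2}{\sigma^3},\qquad
\frac{\partial}{\partial \eta}\log p(u|\eta) = -\frac{n}{\eta} + \frac{\|Tu\|_2^2}{\eta^3}.
\]
Setting each posterior expectation equal to zero and multiplying through by $\sigma^3/m$ and $\eta^3/n$ respectively gives the two relations (\ref{eq: E1}) and (\ref{eq: E2}) immediately. Note that the $\sigma$-derivative only touches the likelihood factor while the $\eta$-derivative only touches the prior factor, which cleanly decouples the two stationarity equations.

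Third, I would address the singular-$T$ case, which is the only genuine subtlety. In that case one must replace $p(u|\eta)$ by its regularized form with the auxiliary Gaussian of variance $\alpha^2$ on the null space, giving an effective normalization $(2\pi\eta^2)^{-(n-r)/2}$. The $\eta$-derivative then picks up $(n-r)$ rather than $n$, but taking $\alpha\to\infty$ (as in the proof of Lemma~\ref{main-lem}) does not affect this count, since $\eta$ governs only the range-of-$T\T$ component. The stated identity using $n$ in the denominator corresponds to the nonsingular form of (\ref{prior}); for singular $T$ one should read $n-r$ in its place, which is a harmless bookkeeping change.

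The main obstacle I would expect is less in the algebra than in the regularity justification: one needs the marginal $p(b|\sigma,\eta)$ to be finite and smooth in $(\sigma,\eta)$, and one needs the exchange of differentiation and integration, both of which follow from the Gaussian tails of the integrand and the fact (from Lemma~\ref{main-lem}) that the posterior is itself Gaussian with a well-defined limit as $\alpha\to\infty$. Once that is in place, the proof is a two-line differentiation of the log marginal and a rewriting of the integral as a posterior expectation.
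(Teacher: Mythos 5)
Your proposal is correct and follows essentially the same route as the paper: differentiate the marginal $p(b)=\int p(b|u)\,p(u)\,\rmd u$ in $\sigma$ and $\eta$, recognize the resulting integrals as posterior expectations, and set the first-order conditions to zero (working with $\log p(b)$ rather than $p(b)$ is an immaterial difference). Your added remarks on exchanging differentiation with integration and on replacing $n$ by $n-r$ for singular $T$ are correct refinements that the paper's own proof does not spell out, but they do not change the argument.
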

\begin{proof}
Using the law of total probability leads to
 \begin{align*}
  p(b) 
  & = \int_{\R^n} p(b|u) p(u) \, du \\
  & = (2\pi \sigma^2)^{-m/2} (2\pi \eta^2)^{-n/2} \det T \int_{\R^n} \text{exp} \left(
  -\frac{\|Au-b\|_2^2}{2\sigma^2} - \frac{\| Tu \|_2^2}{ 2 \eta^2}
  \right) \, \rmd u
 \end{align*}
Differentiating this expression with respect to $\sigma$ leads to
\begin{equation}
 \frac{d}{d\sigma} p(b) 
 = - \frac{m}{\sigma} p(b)+ \sigma^{-3} p(b) \E \left[ \| AU - b\|_2^2 \, | \, b\right] .
\end{equation}
Setting this expression to zero completes the proof for (\ref{eq: E1}), and the details for (\ref{eq: E2}) are similar.
\end{proof}

\begin{theorem}\label{main-thm}
Let $U$ be a random variable with density given by (\ref{prior}), and let $b$ be given with conditional density given by (\ref{likelihood}).  Then $\sigma$ and $\eta$ which maximize $p(b)$ in terms of expectations satisfy the following equalities:
\begin{align*}
\sigma^2 & =  \| A u_\lambda - b \|_2^2  /(m-\trace(H^{-1} A^T A)) \\
\eta^2 & =  \|Tu_\lambda  \|_2^2 /(n- \lambda \trace (H^{-1}T\T T)) ,
\end{align*} 
where $H = A\T A + \lambda T\T T$ and $\lambda = \sigma^2/\eta^2$
\end{theorem}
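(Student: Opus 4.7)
The plan is to combine Lemma \ref{main-lem} and Lemma \ref{lem-2} in a direct way: Lemma \ref{lem-2} gives the maximum-evidence parameters as two conditional expectations, and Lemma \ref{main-lem} lets me rewrite each such conditional expectation as an ordinary expectation over a centered Gaussian perturbation of $u_\lambda$. Applied to the quadratic functions $f(u) = \|Au-b\|_2^2$ and $f(u) = \|Tu\|_2^2$, this should reduce everything to two identities that can be solved algebraically for $\sigma^2$ and $\eta^2$.

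Concretely, first I would take $f(u) = \|Au-b\|_2^2$ in Lemma \ref{main-lem}. Expanding
\[
\|A(u_\lambda + \sigma H^{-1/2}X) - b\|_2^2
= \|Au_\lambda-b\|_2^2 + 2\sigma (Au_\lambda-b)\T A H^{-1/2} X + \sigma^2 X\T H^{-1/2}A\T A H^{-1/2} X,
\]
the cross term vanishes in expectation because $X\sim N(0,I)$, and the quadratic term gives $\sigma^2 \trace(H^{-1/2} A\T A H^{-1/2}) = \sigma^2 \trace(H^{-1}A\T A)$ by the standard identity $\E[X\T M X]=\trace(M)$ and cyclicity of trace. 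Substituting into $m\sigma^2 = \E[\|AU-b\|_2^2\mid b]$ from (\ref{eq: E1}) yields
\[
m\sigma^2 = \|Au_\lambda-b\|_2^2 + \sigma^2\trace(H^{-1}A\T A),
\]
which rearranges to the first claimed identity.

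The second identity follows analogously by choosing $f(u)=\|Tu\|_2^2$. Expanding $\|T(u_\lambda+\sigma H^{-1/2}X)\|_2^2$ and taking expectation gives
\[
n\eta^2 = \|Tu_\lambda\|_2^2 + \sigma^2 \trace(H^{-1}T\T T)
\]
from (\ref{eq: E2}). Using the relation $\sigma^2=\lambda\eta^2$ to replace $\sigma^2$ on the right and collecting $\eta^2$ terms yields
\[
\eta^2\bigl(n - \lambda\trace(H^{-1}T\T T)\bigr) = \|Tu_\lambda\|_2^2,
\]
which is the second identity. The only subtlety is that Lemma \ref{main-lem} was stated under the $\alpha\to\infty$ limit of the regularized prior, so I should remark that the traces $\trace(H^{-1}A\T A)$ and $\trace(H^{-1}T\T T)$ are well defined as the limits of $\trace(H_\alpha^{-1}A\T A)$ and $\trace(H_\alpha^{-1}T\T T)$ (using $H_\alpha^{-1/2}=H^{-1/2}+\mathcal{O}(\alpha^{-2})$ as already established in the proof of Lemma \ref{main-lem}). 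Beyond this, everything is routine linear algebra, so I do not anticipate a substantive obstacle — the only real content is the Gaussian trace identity and the $\sigma^2=\lambda\eta^2$ substitution that produces the $\lambda$ factor in the second formula.
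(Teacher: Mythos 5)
Your proposal matches the paper's proof essentially verbatim: both apply Lemma \ref{main-lem} to $f(u)=\|Au-b\|_2^2$ and $f(u)=\|Tu\|_2^2$, use the Gaussian identity $\E[X\T M X]=\trace(M)$ with cyclicity of the trace, and then combine the resulting expressions with Lemma \ref{lem-2}, with the substitution $\sigma^2=\lambda\eta^2$ producing the $\lambda$ factor in the second formula exactly as in the paper. Your added remark about the $\alpha\to\infty$ limit is a harmless (and reasonable) extra precaution that the paper leaves implicit.
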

\begin{proof}
Letting $f(u) = \| Au-b\|_2^2$ and applying Lemma \ref{main-lem} leads to
$$
\E[\| AU - b \|_2^2 \, | \, b ]= \E[\| A(u_{\lambda}+\sigma H^{-1/2} X) - b\|_2^2].
$$
Expanding this expression out and using the properties of $X\sim N(0,I)$ (see for example, Lemma 7.2 in \cite{vogel2002computational}) leads us to
\begin{equation}\label{eq: exp1}
 \E[\| AU - b \|_2^2 \, | \, b ] = \| A u_{\lambda} - b\|_2^2 + \sigma^2 \text{trace}(H^{-1} A\T A) .
\end{equation}
In a similar fashion
\begin{equation}\label{eq: exp2}
 \E[\| TU \|_2^2 \, | \, b] = \| T u_{\lambda} \|_2^2 + \lambda \eta^2 \text{trace}(H^{-1} T\T T) .
\end{equation}
Combining equations (\ref{eq: exp1})-(\ref{eq: exp2}) with Lemma \ref{lem-2} completes the proof.
\end{proof}


 Theorem \ref{main-thm} is the basis of an iteration to find $\sigma$ and $\eta$ for the ME algorithm, which is given by
\begin{align}
 \sigma_{k+1}^2 &= \| A u_{k}^\ast - b \|_2^2/(m-\text{trace}(H_k^{-1} A\T A) ) \label{eq: iter1}\\
 \eta_{k+1}^2 & = \| T u_{k}^\ast \|_2^2 / (n - \lambda_k \text{trace}(H_k^{-1} T\T T)), \label{eq: iter2}
\end{align}
 where it is implied in this case that $H_k = A\T A + \lambda_k T\T T$, $\lambda_k = \sigma_k^2/\eta_k^2$, and $u_k^\ast$ is the Tikhonov regularized solution for parameter $\lambda_k$.
This iteration is set to converge whenever
\begin{equation}\label{eq: convergence}
 \frac{\| u_{k+1}^\ast -  u_k^\ast \|_2}{\|  u_k^\ast \|_2} < tol,
\end{equation}
or until we reach some maximum number of iterations $K$, which we demonstrate only mildly depends on $\lambda_0$.

One must consider the cost of such an iteration.  Later in section \ref{sec: acc} we give the most useful results to deal with the computational cost.  The results in that section lead to an extremely efficient scheme for this iteration for some of the most common inverse problems, including denoising, deconvolution, and MRI.  For completeness however, we present the general algorithm first.  Obviously each iteration requires the minimization (\ref{eq: tik}) to find $u_k^\ast$, which for general sampling matrices $A$ is most suitably solved with a Krylov subspace method, in which case we implement a conjugate gradient method.  The biggest computational burden for the general problem is in approximating the traces in (\ref{eq: iter1}) and (\ref{eq: iter2}), for which a Monte Carlo method \cite{bai1996some} is suggested.  Specifically, the trace of any square matrix $C$ is given by $\E [X\T C X] = \trace(C)$, where $X$ is a random vector of independent Gaussians.  In its pure form this method uses a set of independent pseudo-random vectors $\{x_j\}_{j=1}^J$.  The error of the estimate is noticeably reduced if the set is first orthogonalized, notwithstanding the bias that is introduced.  One can precompute $A\T A x_j$ and $T\T T x_j$.  However, since $H$ depends on $\sigma$ and $\eta$ we must compute approximations to $H^{-1} x_j$ at each iteration. Hence at each iteration over $\sigma$ and $\eta$ we have $J+1$ solves of (\ref{eq: tik}).  An outline of the general algorithm is provided in Algorithm \ref{algo-main}.


\begin{algorithm}[!ht]
\caption{}
\label{algo-main}
\begin{algorithmic}[1]
\STATE{Inputs: $b$, $A$, $T$, $\lambda_0$.}
\STATE{Generate random vectors $\{ x_j \}_{j=1}^J$ with i.i.d entries, mean value 0 and variance 1.}
\STATE{Compute $y_j = T\T T x_j$ and $z_j = A\T Ax_j$, for $j=1,\dots , J$.}
\FOR{$k$=0 \TO K}
\STATE{Define $H_k = A\T A + \lambda_k T\T T$.}
\STATE{Numerically evaluate $u_k^\ast = H_k^{-1} A\T b$ using conjugate gradient method.}
\STATE{Numerically evaluate $w_j = H_k^{-1} x_j$ using conjugate gradient method, for $j= 1, \dots , J$.}
\STATE{Compute $\mathbb T = \tfrac{1}{J} \sum_{j=1}^J w_j\T y_j$ and $\mathbb A= \tfrac{1}{J} \sum_{j=1}^J w_j\T z_j$ to serve as inital approximations to \text{trace}$(H_k^{-1} T\T T)$ and \text{trace}$(H_k^{-1} A\T A )$, respectively.}
\STATE{Improve trace approximations by setting $\mathbb A_2 = n \mathbb A  /(\mathbb A + \lambda_k \mathbb T)$ and $\mathbb T_2 = n \mathbb T  /(\mathbb A + \lambda_k \mathbb T)$, where we have used the identity $n = \text{trace}(H_k^{-1} A\T A) + \lambda_k \text{trace}(H_k^{-1} T\T T)$.}
\STATE{Set $\sigma_{k+1}^2 = \| Au_k^\ast - b \|_2^2/(m- \mathbb A_2) $ and $\eta_{k+1}^2 = \| Tu_k^\ast \|_2^2 /(n- \lambda_k \mathbb T_2)$.}
\STATE{Set $\lambda_{k+1} = \sigma_{k+1}^2 / \eta_{k+1}^2$.}
\ENDFOR
\end{algorithmic}
\end{algorithm}


\begin{figure}[ht]
\centering
\includegraphics[width=1\textwidth,trim={0.5cm 3.5cm 2cm 3cm}, clip]{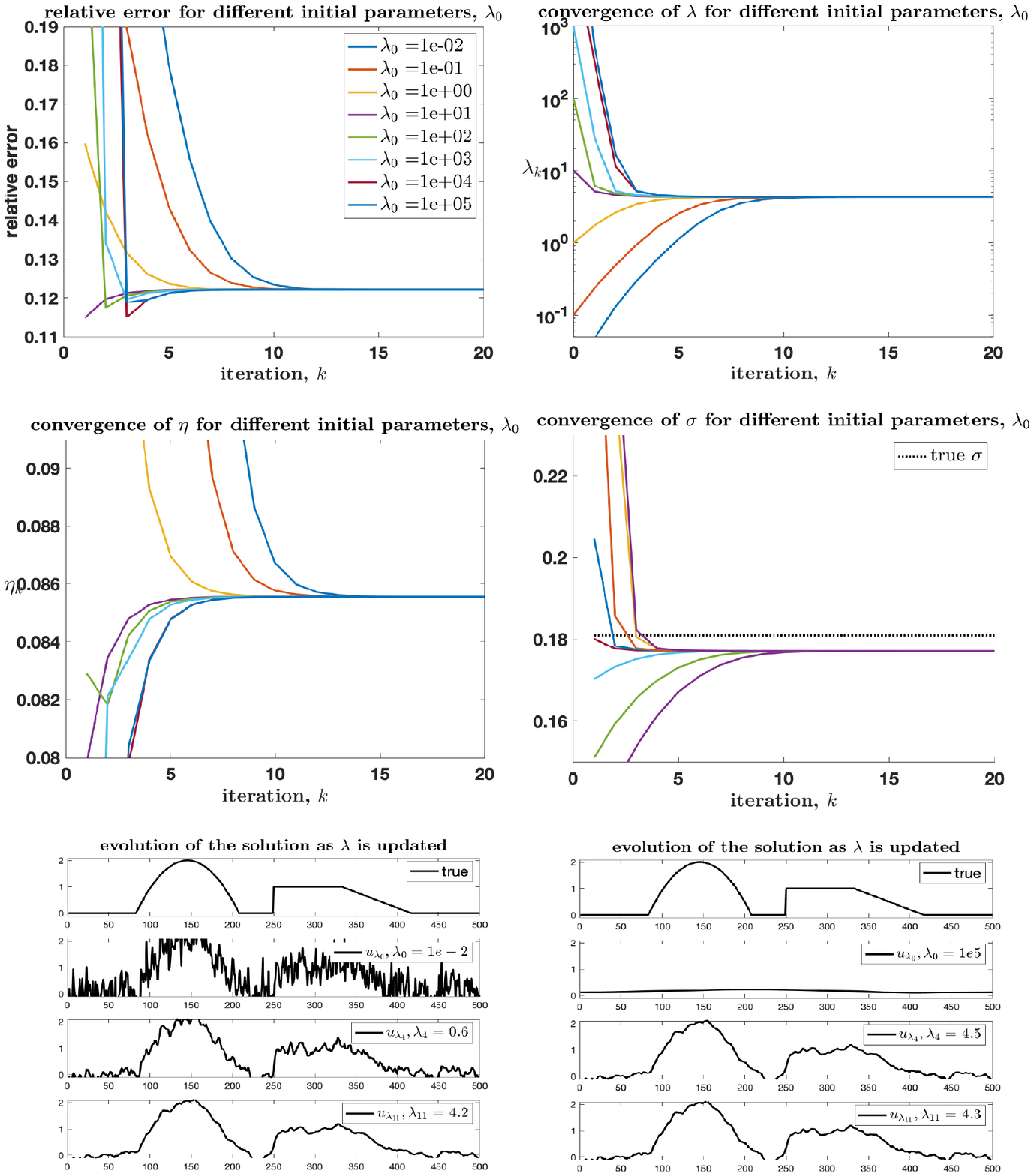}
\caption{Evolution of the parameters $\sigma$, $\eta$, and $\lambda$ for different initial parameter selections $\lambda_0$, and the evolution of two solutions.}
\label{lovely-example}
\end{figure}

We proceed with an example demonstrating the effectiveness of this approach on a 1D piecewise quadratic signal of dimension $n=500$.  The sampling matrix $A\in\R^{n\times n}$  was generated randomly with independent normally distributed entries.  For the additive i.i.d. Gaussian noise vector $\epsilon$ we chose $\sigma$ so that the signal to noise ratio (SNR) is 2.  For the regularization operator $T$, {\TLS{in order to avoid the inverse crime of prior knowledge of the quadratic signal, we used a first order finite difference to penalize the discrete first derivative.} } The only parameter left to choose is $\lambda_0$.  
We chose a series of initial $\lambda_0$'s that are equally spaced logarithmically and plot, in Figure \ref{lovely-example}, the evolution of various parameters and errors as the iteration progresses. 
At the very least, it should be evident that the convergence in this example is only mildly dependent of the initial $\lambda_0$ even at several orders of magnitude difference, and in all cases convergence is practically achieved in 10 iterations or less.  Moreover, the relative error (top left), defined by the relative difference of the reconstruction and the true solution analogous to (\ref{eq: convergence}), is observed to {\TLS{nearly monotonically decrease after each update, indicating a near optimal recovered solution for $\lambda$. }} All simulations converge the same final $\lambda$ with a maximum relative difference between the parameters to be less than $4\times 10^{-5}$.  This in turn generates very similar solutions $u_\lambda$, as evidenced by the similarity of the error plots.  The convergence of $\sigma$ and $\eta$ are also presented, and $\sigma$ is shown to accurately converge close to the true value in all cases.  Finally, for completeness, the evolution of the solutions are presented in the bottom right two panels for initial selections of $\lambda_0 = 10^{-2}$ and $\lambda_0 = 10^5$.  Many other numerical results are presented later that further confirms the findings of this example.

\subsection{Comparison with UPRE} \label{sec: UPRE}
\begin{figure}[ht]
 \centering
 \includegraphics[width=.4\textwidth]{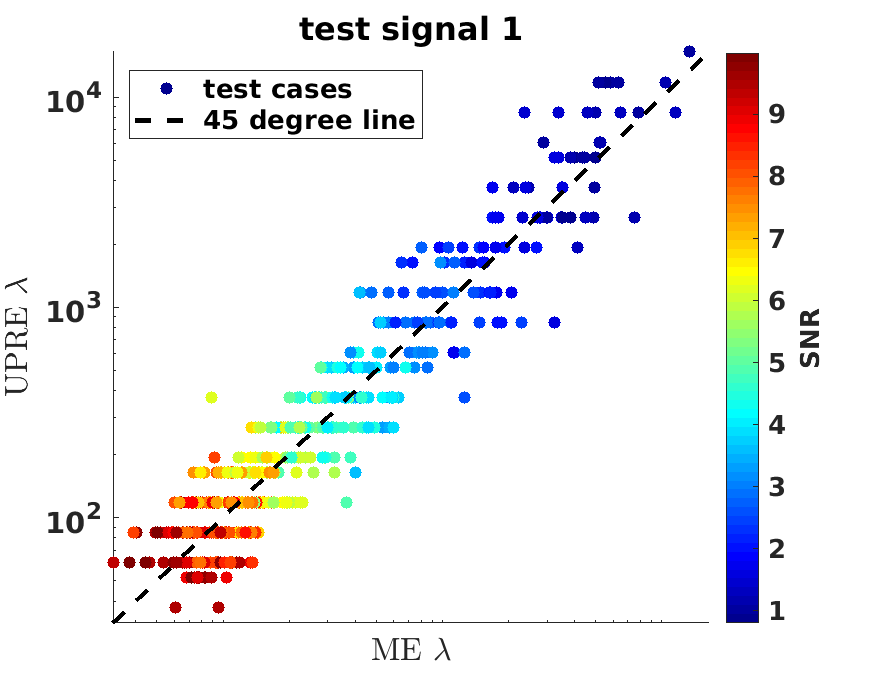}
 \includegraphics[width=.4\textwidth]{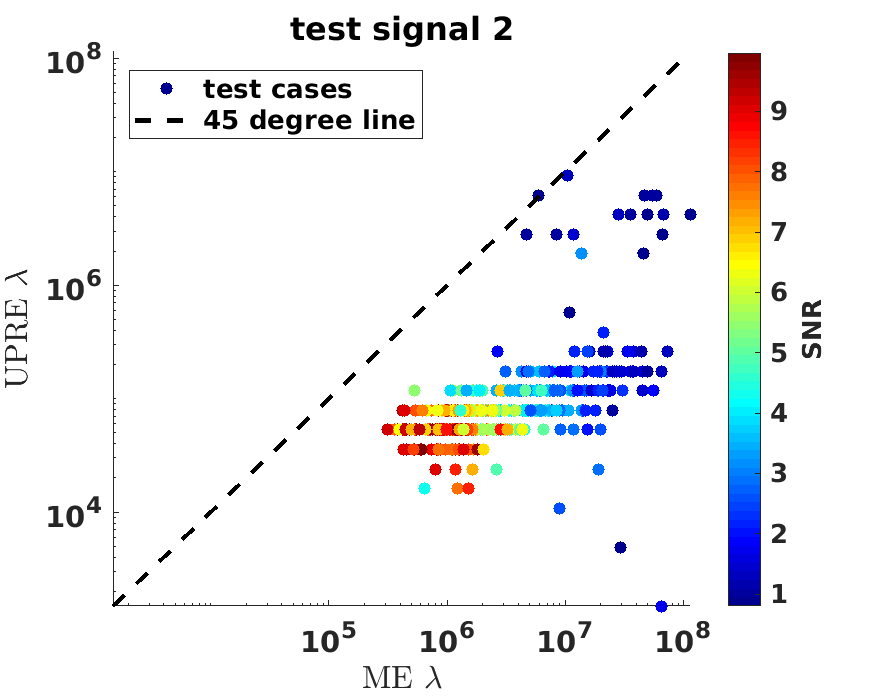} \\ 
 \includegraphics[width=.4\textwidth]{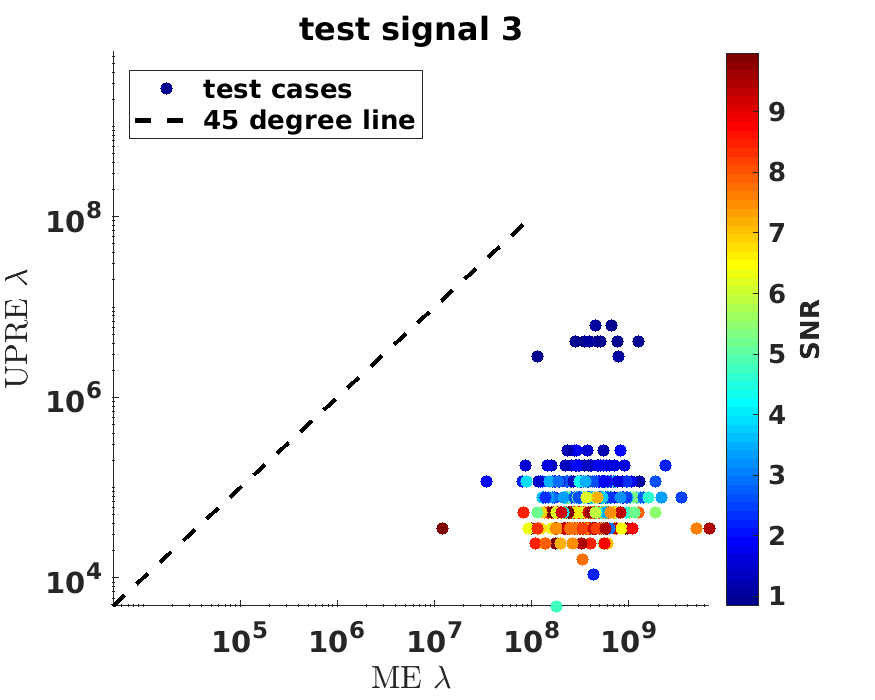}
  \includegraphics[width=.4\textwidth]{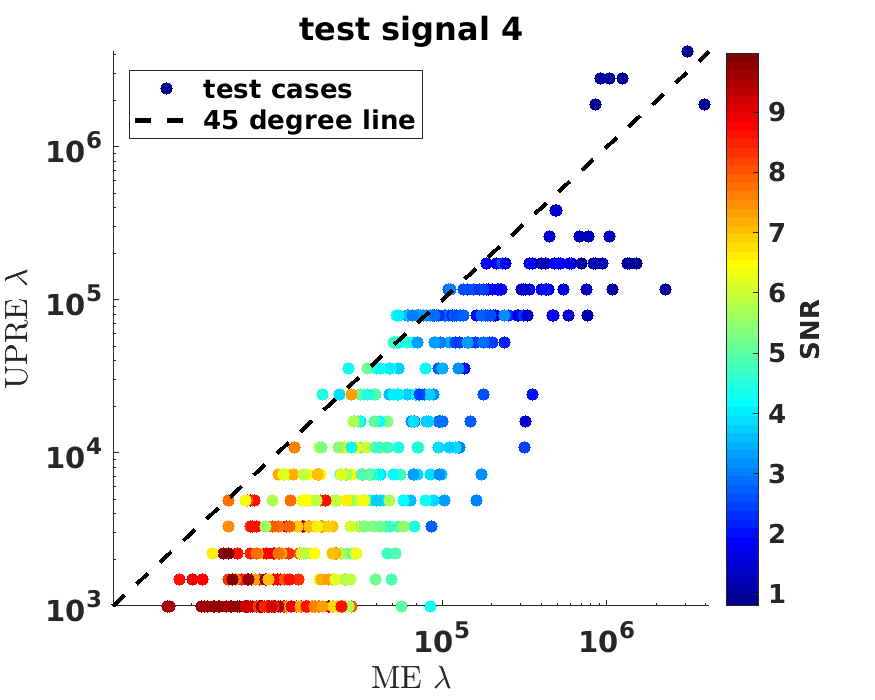}
 \caption{Recovered parameter from our method (with $\sigma$ assumed unknown) compared with UPRE (with $\sigma$ assumed known) for 4 test signals and 500 trials each.}
 \label{fig: UPRE}
\end{figure}

\begin{figure}[ht]
 \centering
 \includegraphics[width=.4\textwidth]{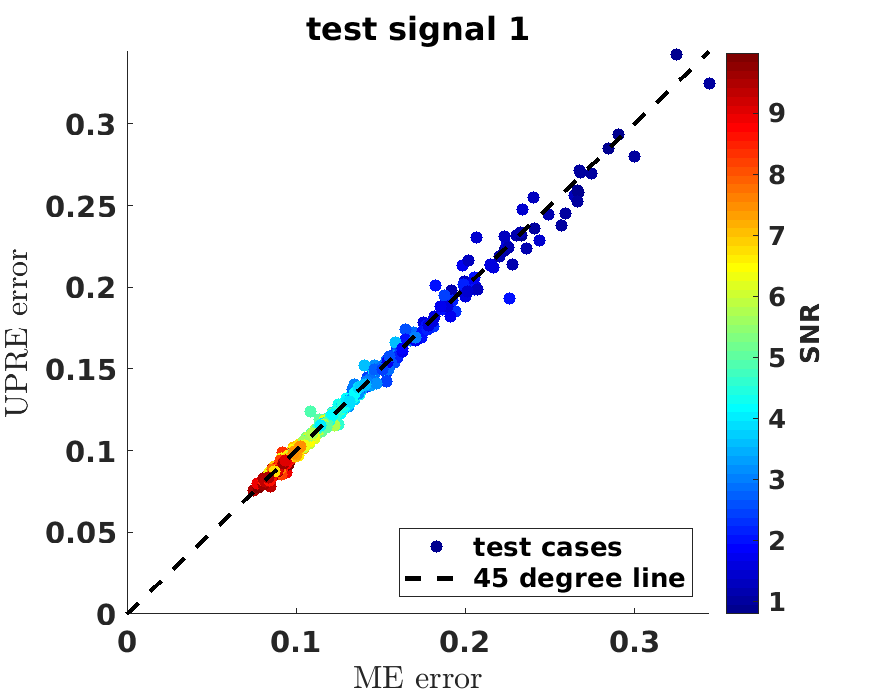}
 \includegraphics[width=.4\textwidth]{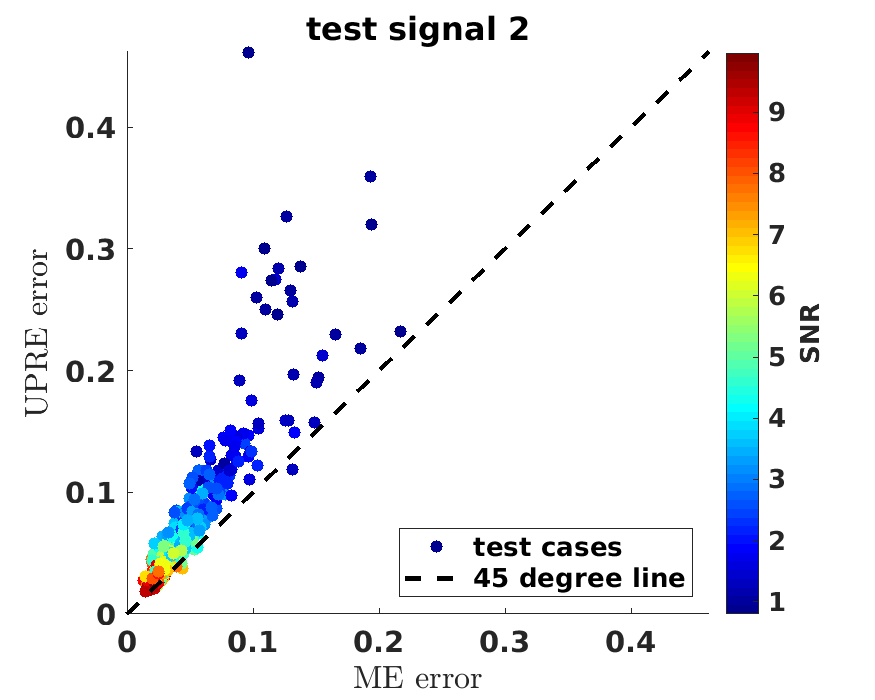} \\ 
 \includegraphics[width=.4\textwidth]{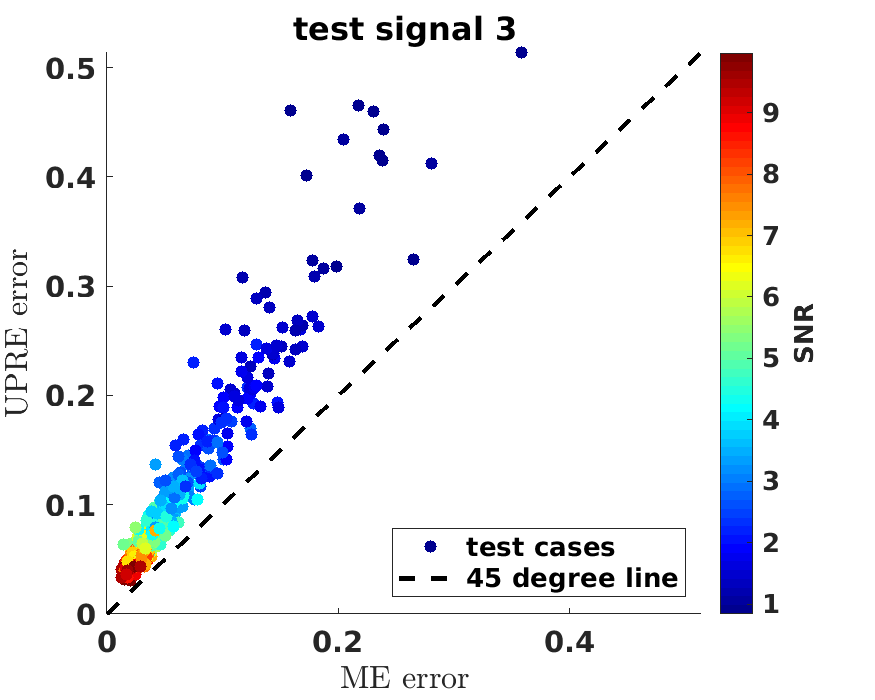}
  \includegraphics[width=.4\textwidth]{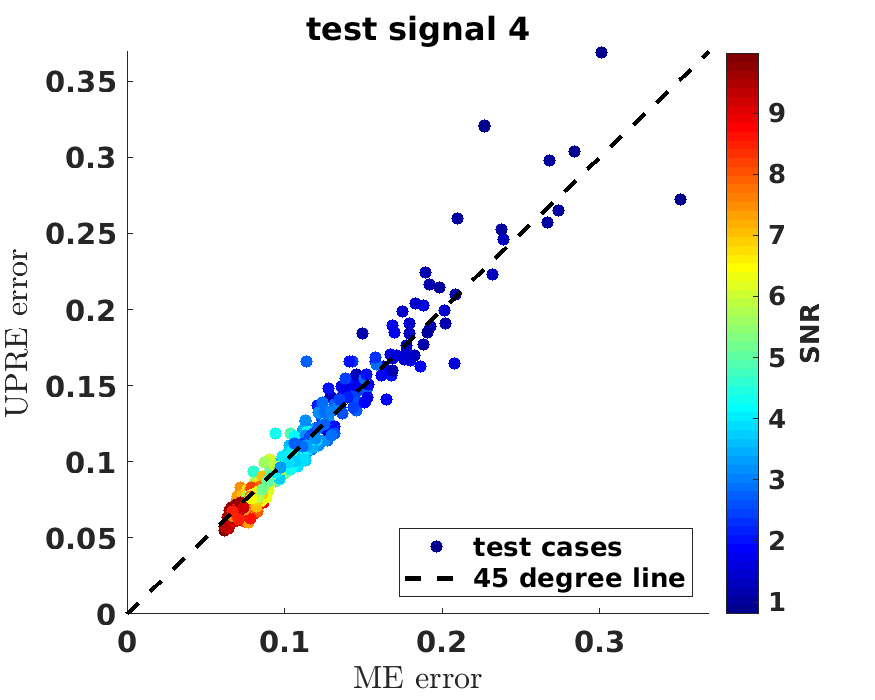}
 \caption{Error between the true solution and the reconstruction from our method (with $\sigma$ assumed unknown) compared with UPRE (with $\sigma$ assumed known) for 4 test signals and 500 trials each.}
 \label{fig: UPRE-error}
\end{figure}
In this section we compare our approach with UPRE.  
For 4 distinct test signals, 500 such simulations were performed.  In each case a random square sampling matrix $A\in\R^{n\times n}$ was generated randomly with independent normally distributed entries, and a random $\sigma$ value was generated on the interval $[.8,10]$.  Then our iterative scheme was implemented with a maximum of only 15 iterations to find $\sigma$ and $\eta$.  The 4 distinct signals are as follows: test signal 1 is a piecewise constant boxcar signal, test signal 2 is a piecewise linear hat function, test signal 3 is one period of a sine wave, and test signal 4 is the piecewise quadratic from \cite{sanders2017multiscale} and the previous example.  The corresponding regularization operators $T$ were chosen as various finite difference operators that appropriately match the signal properties, {\TLS{except again to avoid the inverse crime of overfitting each problem we used a first order finite difference regularizer for test signal 4.  For test signal one, we used a first order finite difference regularization, and for test signals 2 and 3 we used a multiscale second order finite difference regularization operator \cite{sanders2017multiscale}.}} Recall our method does not require knowledge of $\sigma$ whereas UPRE does require $\sigma$.


For UPRE, we followed the general approach suggested by Vogel \cite{vogel2002computational}, by selecting a series of $\lambda$ test values (e.g. 20) that are logarithmically equally spaced and choosing the parameter which minimizes the UPRE objective function.  From here, we even performed a second refinement by testing an additional series of parameter values near this parameter (also logarithmically equally spaced) and then settling on the optimal parameter from this set.

The scatter plots comparing the recovered parameters from our method and UPRE are shown in Figure \ref{fig: UPRE}.  These show that the two methods generally yield similar results (points near the dashed line mean similar recovered parameter values), with varying success and trends between the test signals.  These plots also show a very positive relationship between the SNR and $\lambda$ (lower SNR values almost always yield larger $\lambda$).

There are some notable discrepancies in the recovered $\lambda$ from our method and UPRE, particularly with test signals 2 and 3.  Therefore to discern the two methods we provide in Figure \ref{fig: UPRE-error} a second set of scatter plots from the simulations, where the resulting errors from our method and UPRE are plotted against one another.  For test signal 1, we see that the two methods generate similar approximations to the true solutions, which should be expected since the two generate very similar $\lambda$ values seen in Figure \ref{fig: UPRE}.  On the other hand, for the remaining 3 test signals some discrepancies were observed in the recovered $\lambda$ value, and the error plots in Figure \ref{fig: UPRE-error} show that our method tended to yield improved solutions (and hence $\lambda$ values), particularly for signals 2 and 3.  Therefore we conclude from these examples that our method, while assuming far less information that UPRE, may still yield improved results in the reconstruction.

\subsection{Accuracy of $\sigma$}

\begin{figure}[ht]
 \centering
 \includegraphics[width=.4\textwidth]{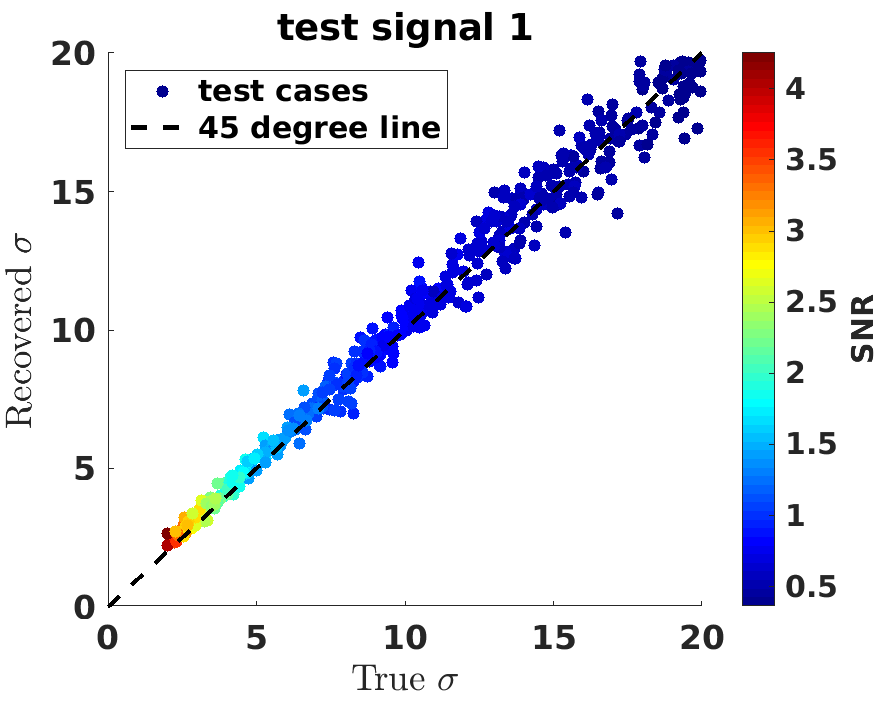}
 \includegraphics[width=.4\textwidth]{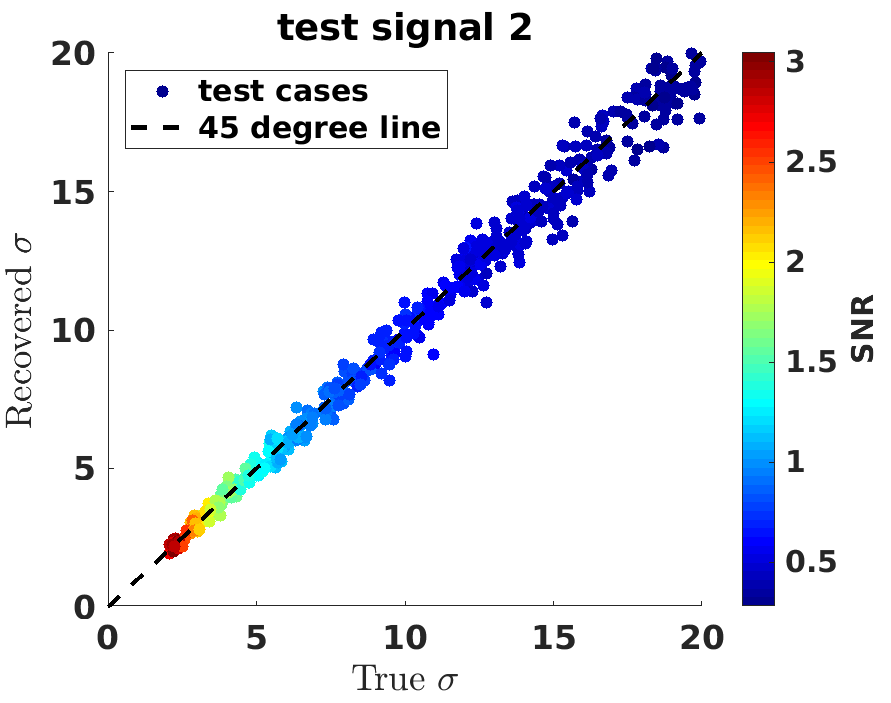} \\ 
 \includegraphics[width=.4\textwidth]{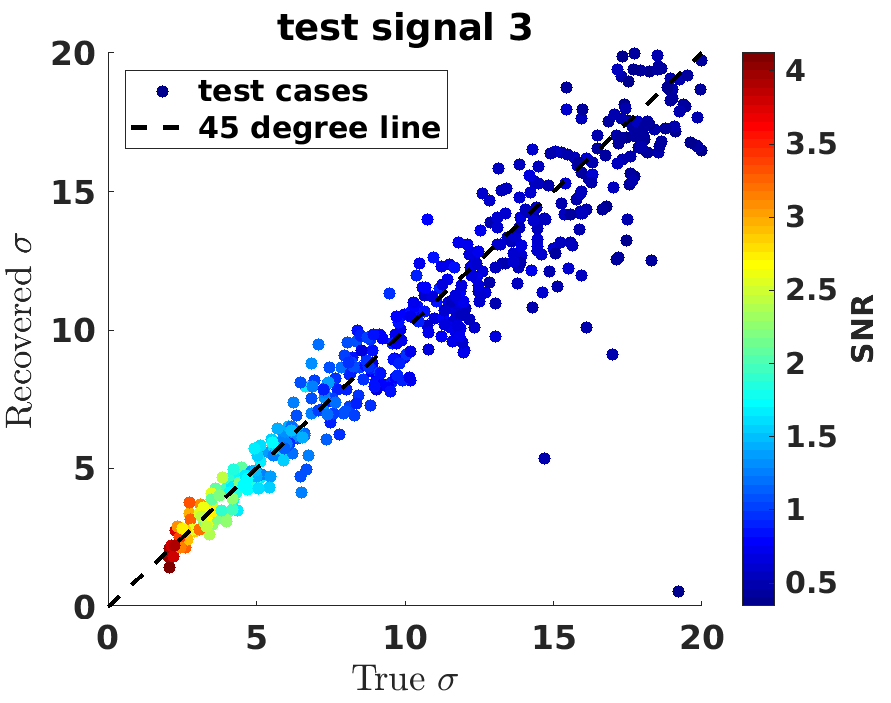}
  \includegraphics[width=.4\textwidth]{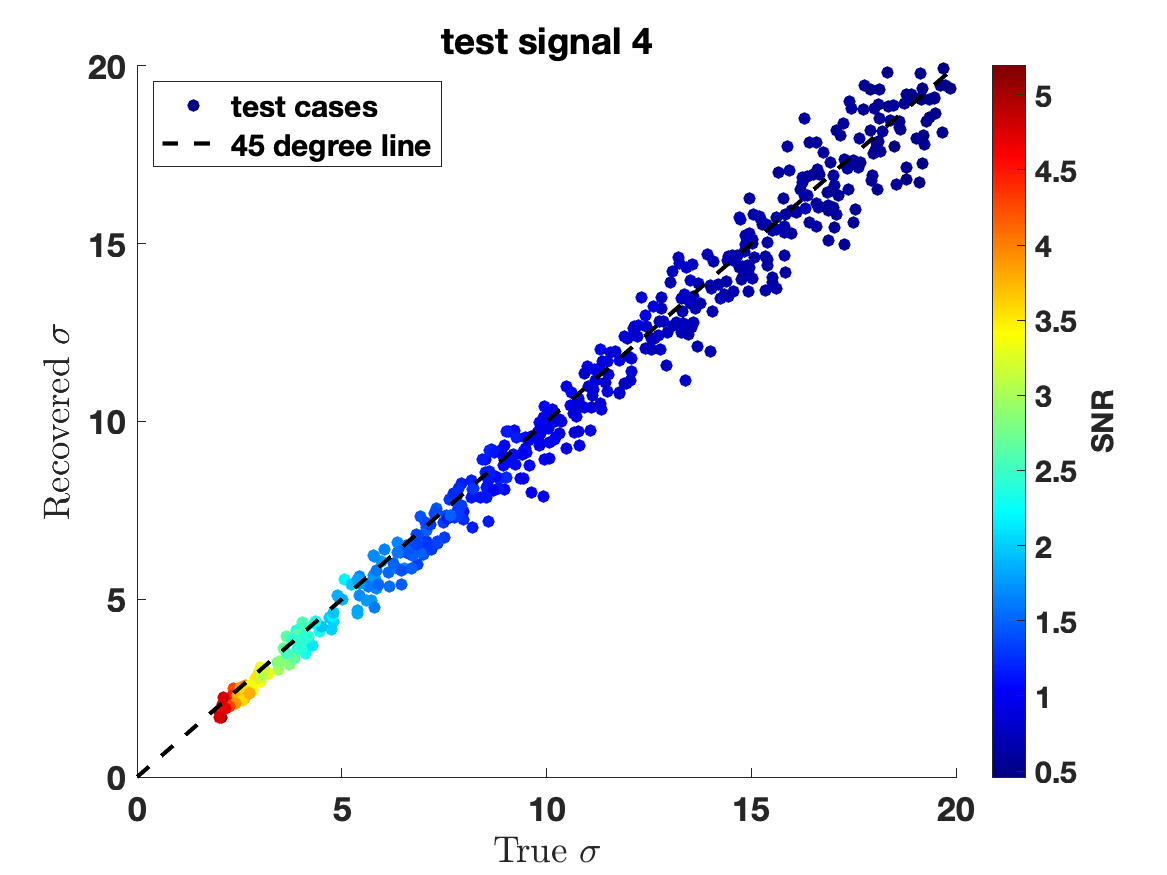}
 \caption{Recovered standard deviations plotted against the true standard deviations for 4 test signals and 500 trials each.}
 \label{fig: scatter}
\end{figure}


In this section we repeat a similar set of simulations from section \ref{sec: UPRE} to search for the parameter $\lambda = \sigma^2 / \eta^2$.  The general set up was the same as before, where this time we specified a random value of the SNR selected from the interval $[2,20]$ to determine the additive noise.

The scatter plots comparing the true $\sigma$ and recovered $\sigma$ are given in Figure \ref{fig: scatter}, and the corresponding SNR is given by the coloring.  These show that our scheme generally yields very accurate estimates of the variance under these settings.  There is greater spread for larger variances, but the \emph{relative} error does not necessarily increase.  Moreover, out of these 2,000 simulations there are only two clear poor solutions for $\sigma$, which come from test signal 3.

\subsection{2D Tomographic Example}\label{sec: tomo}
\begin{figure}[ht]
\centering
 \includegraphics[width=1\textwidth]{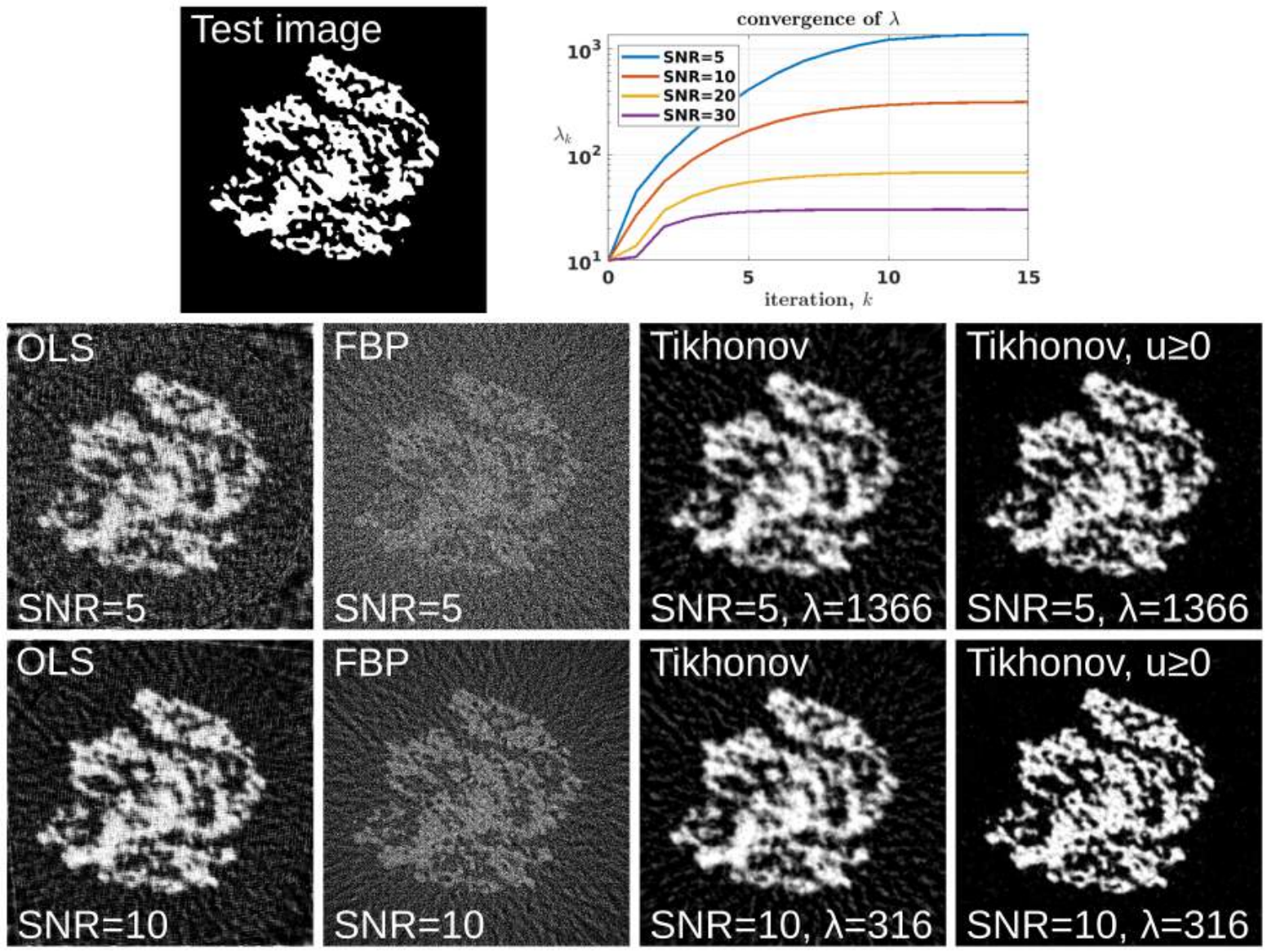}
 \caption{Tomographic reconstruction from 36 projection angles using our automatic parameter selection.}
 \label{fig: tomo}
\end{figure}
In this section we apply our automatic parameter selection to a 2D tomographic imaging example with parallel beam geometry.  For this problem the data vector $b$ takes values of the form
\begin{equation}\label{eq: tomo}
Au (t_i ,\theta_j ) = \int_{\R^2} u(x,y) \delta( t_i - (x,y) \cdot (\cos \theta_j , \sin \theta_j ) ) \, \rmd x \, \rmd y
\end{equation}
for $i=1,\dots, N$ and $j = 1,\dots , M$.  These types of inverse problems occur in a large number of applications, e.g. X-ray medical imaging, and electron and neutron tomography.  Typically the mesh formed by the $t_i$ is very fine, whereas the angular spacing $\theta_j$ can be quite large depending on the application.  

For our test problem we set a fixed $ \Delta \theta = \theta_{j+1} - \theta_j$ to be $5\degree$, and acquired data of the form (\ref{eq: tomo}) at each such angle increment over the full $180\degree$ extent for a total of 36 angles.  The spacing $t_i$ is set so that $N = 512$ resulting in $b\in \R^{512\cdot 36}$.  However, the image is also $u \in \R^{512\times 512}$, making the problem severely underdetermined.  Based on the parameters specified above, the corresponding forward and adjoint operator is computed efficiently in MATLAB with a sparse matrix, which can be constructed from our openly available software \cite{toby-web}. 

We tested 4 different SNRs, 5, 10, 20, and 30, and used a first order finite difference regularizer.  The initial choice of $\lambda$ was $\lambda_0 = 1$.  The resulting convergence of $\lambda$ for each case is plotted in logarithmic scale in Figure \ref{fig: tomo}, along with the resulting image reconstruction at SNRs of 5 and 10.   In the left two columns are the unregularized reconstructions from ordinary least squares (OLS) and filtered backprojection (FBP).  In the right two columns are the $\ell_2$ Tikhonov regularized solutions from our recovered optimal parameter, where the right most solution was recomputed under the constraint that $u$ be a nonnegative mass-density function.  The unconstrained Tikhonov and OLS solutions were computed with an iterative conjugate gradient method, which is likely the most efficient approach due to the sparse nature of the sampling operator.  The nonnegative Tikhonov solution is computed with a projected gradient decent approach \cite{toby-web}.

Observe that the quality of the regularized solutions are quite good compared with the unregularized, and the density constraint provides further notable improvement.  Also observe in plots of $\lambda$ that the recovered value accurately reflects changes in the SNR levels.

\section{Mapping Tikhonov Parameters onto L1 Parameters}\label{sec: L1}
Consider again the general inverse problem where the noise vector $\epsilon$ is i.i.d. mean zero Gaussian with variance $\sigma^2$ and the signal variance is $\eta^2$.  For the Gaussian prior on the signal, this lead us to the prior given in (\ref{prior}), and equivalently the regularization in (\ref{reg-l1}) with $p=2$ and $\lambda = \sigma^2/\eta^2$.  If we instead assume a Laplacian prior, i.e. an $\ell_1$ regularization with $p=1$ in (\ref{reg-l1}), then this prior with variance $\eta^2$ is given by \begin{equation}\label{prior-lap}
 p(u| \eta ) =\frac{\det T }{(\sqrt{2}  \eta)^{n}} \text{exp} \left(-\frac{\| T u\|_1}{\eta/\sqrt{2} }\right) .
\end{equation}
This leads to the MAP solution given by
\begin{equation}\label{eq: l1map}
u_{\sigma, \eta} = \arg \max_u C \text{exp} \left(-\frac{\| A u - b \|_2^2}{2\sigma^2} \right)
\text{exp} \left(-\frac{\| T u\|_1}{\eta/\sqrt{2} }\right),
\end{equation}
and we see this formulation is equivalent to (\ref{reg-l1}) with $p=1$ and $\lambda  = 2^{3/2} \sigma^2 / \eta$.  Due to the favorable analytical properties $\ell_2$ norm, in section (\ref{sec: alg}) we were able to derive an iteration for the variances $\sigma^2$ and $\eta^2$ with a Gaussian prior.  Moreover, the iterations for the $\ell_2$ prior can be evaluated very quickly, hence we use the iterative scheme for the Gaussian prior to find $\sigma^2$ and $\eta^2$.  These variances can then be put into the $\ell_1$ MAP estimation (\ref{eq: l1map}) and yield the corresponding $\lambda$ for the $\ell_1$ regularized problem as written above.  The $\ell_1$ optimization problem is solved using the alternating direction method of multipliers approach \cite{wu2010augmented,boyd2011distributed}, and our code is openly available \cite{toby-web}.

\begin{figure}[ht]
 \centering
 \includegraphics[width=.4\textwidth]{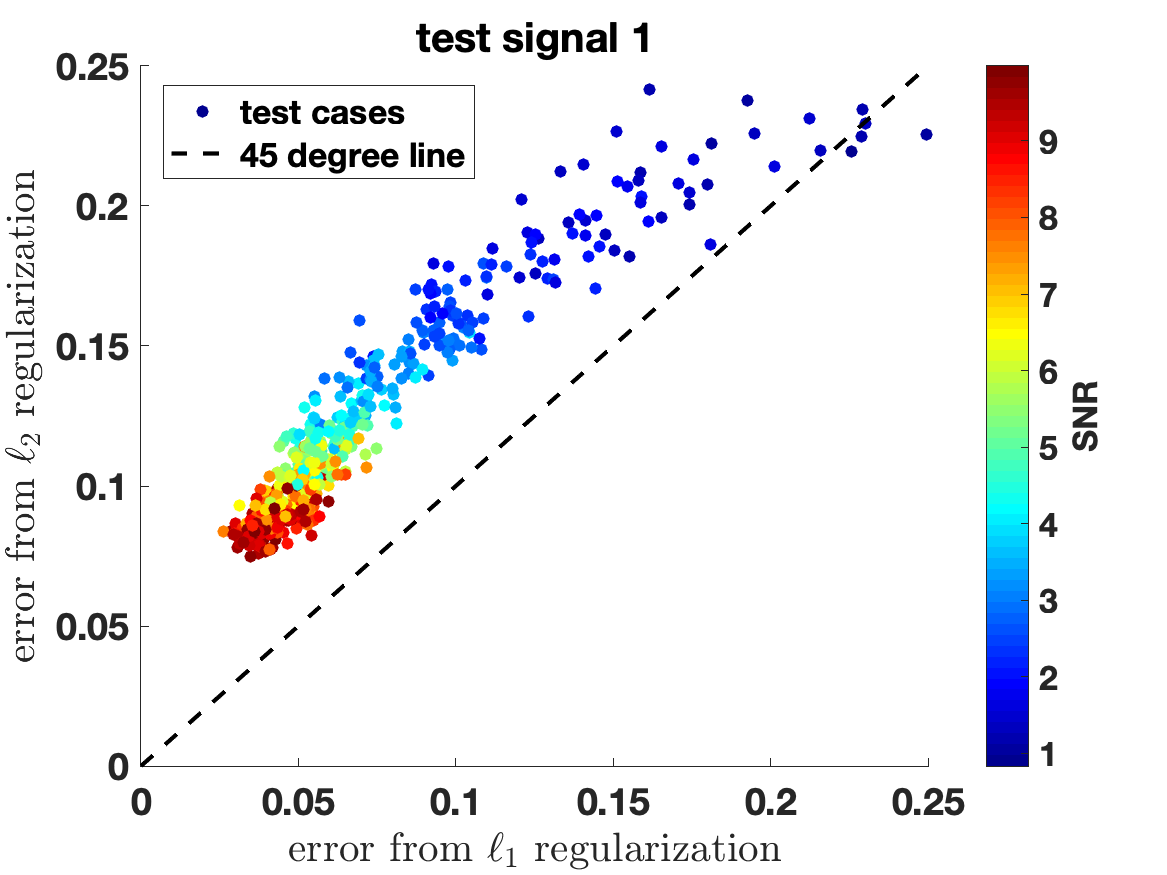}
 \includegraphics[width=.4\textwidth]{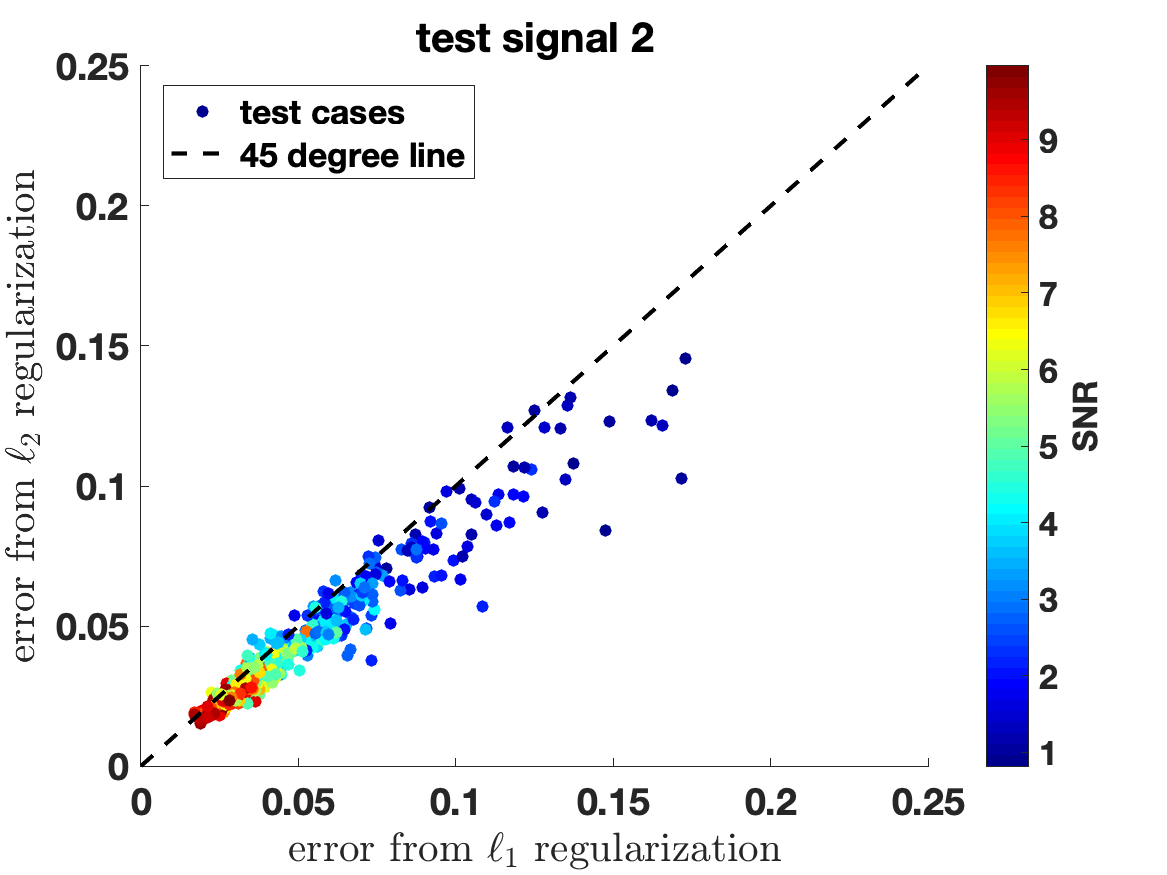} \\ 
 \includegraphics[width=.4\textwidth]{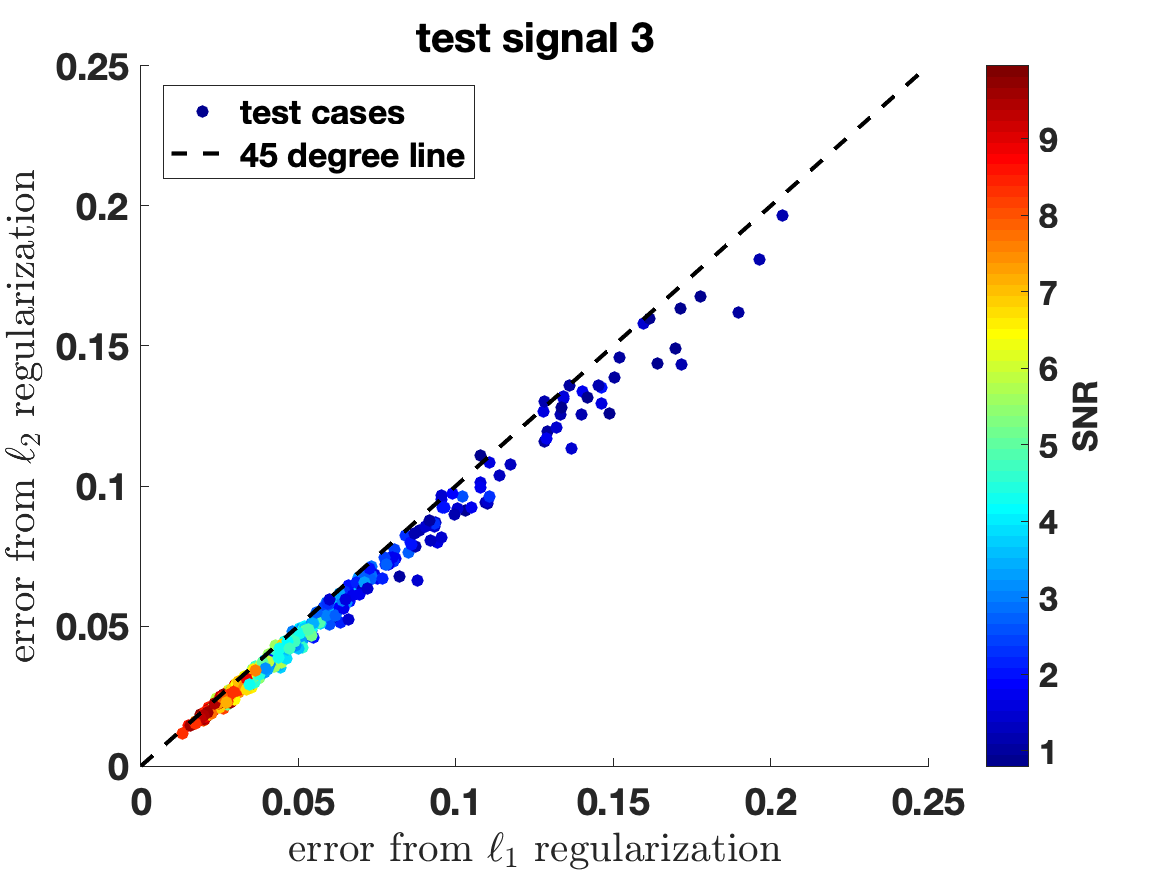}
  \includegraphics[width=.4\textwidth]{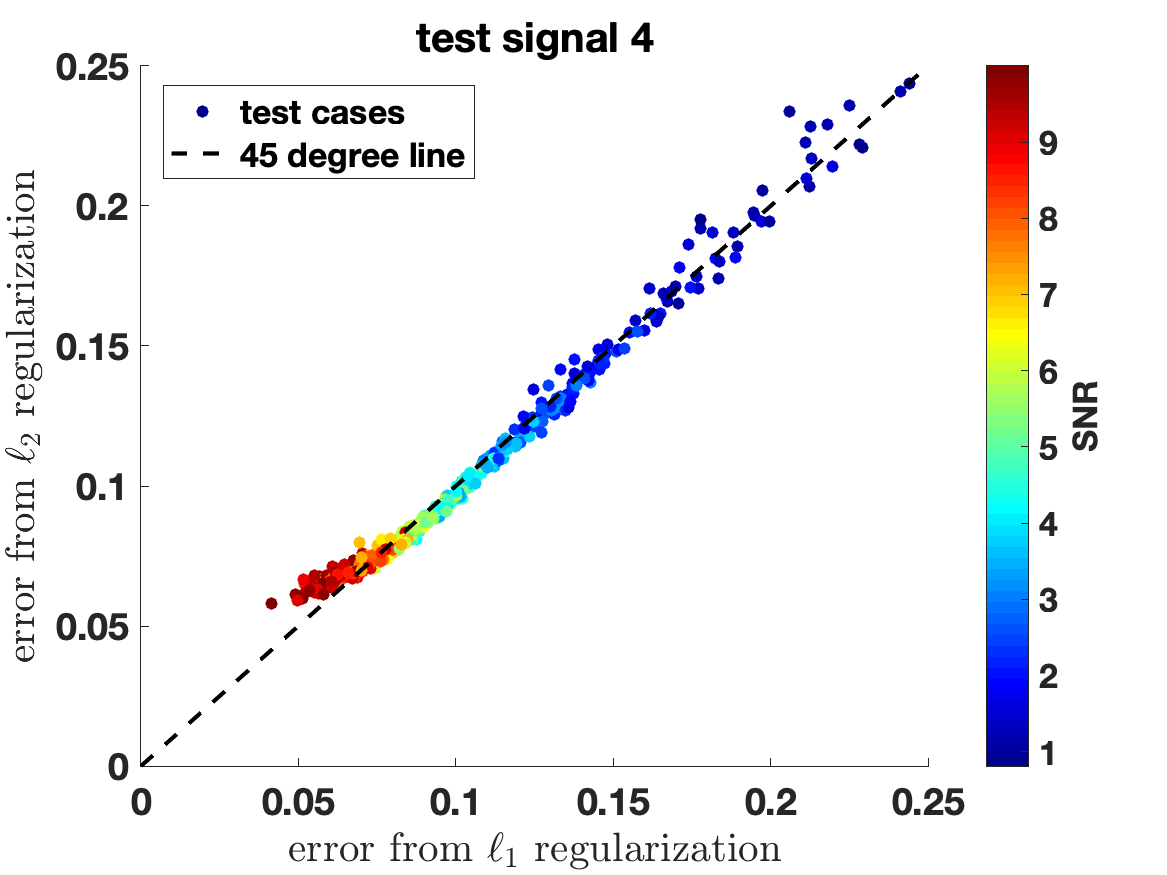}
 \caption{Scatter plot for the $\ell_1$ and $\ell_2$ regularization errors between the recovered and true signals from the recovered optimal parameter $\lambda$ for the 4 test signals and 500 trials each.  The optimal $\ell_2$ parameter was determined by ML and was projected onto the $\ell_1$ parameter using the Bayesian formulation.}
 \label{fig: L1}
\end{figure}

The use of the Gaussian prior to select $\lambda$
for the Laplace prior is justified as follows.  The prior $p(u|\eta)$ given by Eq. (\ref{prior-lap}), as a function of $Tu$,
is the density function of a sum of $n$ independent
Laplace random variables each with variance $\eta^2$.
This can be approximated by a sum of Gaussian random variables
having the same variance,
and by the central limit theorem the difference between the two distributions
gets smaller as $n$ becomes larger.
Moreover, the marginal probability $p(b) = p(b|\sigma,\eta)$,
defined in the proof of Lemma~2,
depends not pointwise on the prior but on an integral of the prior,
further diminishing the difference.
And it is the maximization of $p(b|\sigma,\eta)$
that determines $\sigma$ and $\eta$.
Therefore, the optimal choice for $\ell_2$
is expected to be very nearly optimal for $\ell_1$.

Numerical tests of these concepts are presented in Figure (\ref{fig: L1}).  Here we have solved for $\sigma$ and $\eta$ for the Tikhonov regularized problem as before, and then used these parameters to yield $\lambda$ for the $\ell_1$ regularized problem as written above.  From these parameters, the measured errors between the true solution and the recovered solution are plotted against one another for the $\ell_1$ and $\ell_2$ regularized solutions.  The set up for these simulations was the same as those in section \ref{sec: UPRE}.  It is observed from these plots that projecting the parameters $\sigma$ and $\eta$ onto the $\ell_1$ problem generally provides solutions that are comparable to the $\ell_2$, and in many cases the $\ell_1$ solution provides better results.  Hence it appears to be a quite reasonable strategy to use the ML estimation for the $\ell_2$ parameter to select the $\ell_1$ parameter.


{\TLS{Finally, we present numerical results for deconvolution of 2D images with different noise levels and convolutional kernels. The convolution kernels are symmetric Gaussian point spread functions with different standard deviations, $\omega$, varying between $\omega =  0.5$ and $\omega = 3$, where we are using the convention that a unit length in an image is one pixel.  The noise added is i.i.d. white Gaussian noise, where the SNR is defined by the mean image value divided by the standard deviation of the noise.  The simulations are performed on three classical test images, the cameraman, Shepp-Logan phantom, and the peppers image.  The first two images are grayscale and the third was converted to grayscale for these simulations.  An extremely fast version of the algorithm is implemented for deconvolution, as outlined in the next section.  In each case, we simply used a first order finite difference regularization matrix, $T$.  The $\ell_2$ parameters found from our algorithm are again projected onto the $\ell_1$ formulation.  Some of the resulting images for the cameraman are shown Figure \ref{fig: cameraman}.  

From these simulations, the resulting errors (compared with the known true solution) are shown in Figure \ref{fig: numerical-deconv}, where the relative error presented is analogous to (\ref{eq: convergence}).  These results indicate that our approach and UPRE yield very similar results for these simulations, with marginal improvements in our approach for a few cases.  Moreover, our approach, as opposed to UPRE, was implemented without the knowledge of the noise level in the data and uses a quickly convergent fixed point iteration to find the optimal parameters.  The $\ell_1$ solutions found by projecting the recovered $\ell_2$ parameters are notably improved in each case, and significantly so for the Shepp-Logan phantom (results in middle row), likely due to the fact that the total variation $\ell_1$ regularization is ideal for this image.  

The plots in the far right column provide a closer comparison by showing the errors curves resulting from the fourth row of each error matrix, which is the case $\omega = 1.33$.  Included in these plots is the error obtained from the $\ell_2$ and $\ell_1$ solutions where the parameter was tuned to minimize the error between the reconstruction and the true solution, which is obviously not possible in practice.  Nonetheless, this result indicates that the solution obtained using our approach for both the $\ell_2$ and $\ell_1$ parameter selection is \emph{nearly} optimal, particularly for the last two test images.  One could not hope for a better result in this case.
}}

\begin{figure}[ht]
\centering
\includegraphics[width=1\textwidth]{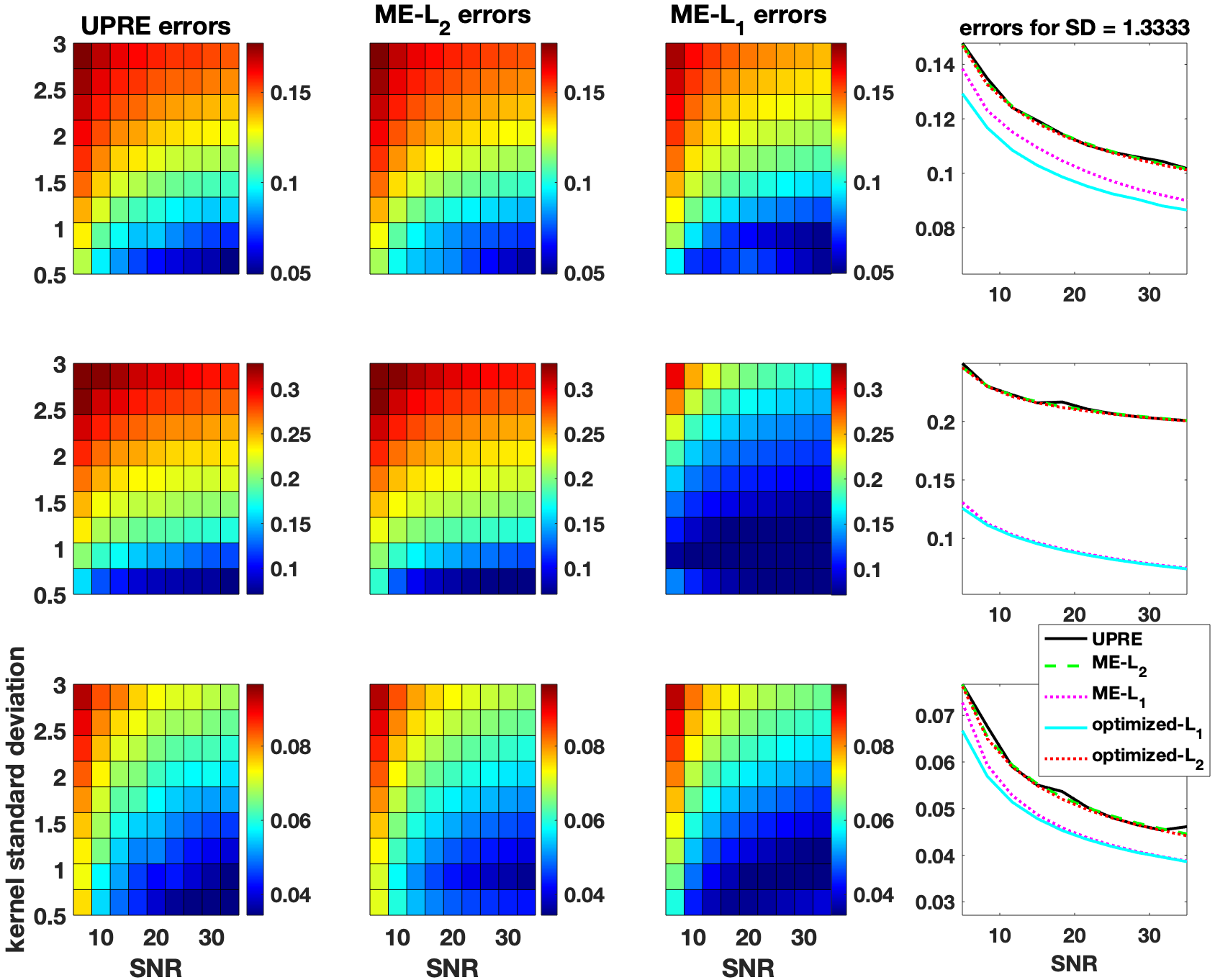}
\caption{Reconstruction errors for 2D image deconvolution examples.  Top row is for the cameraman image, middle row is for the Shepp-Logan phantom, and the bottom row is for the peppers image.  The plots in the far right column show the error from the fourth row error matrices where the standard deviation of the blurring kernel is $\omega = 1.33$.  The "optimized" solutions in these plots refer to the result of tuning $\lambda$ to the best possible solution by making use of the true solution, which is obviously an absurd baseline, but it indicates that our proposed parameter selection is \emph{near} optimal.}
\label{fig: numerical-deconv}
\end{figure}

\begin{figure}[ht]
\centering
 \includegraphics[width=1\textwidth]{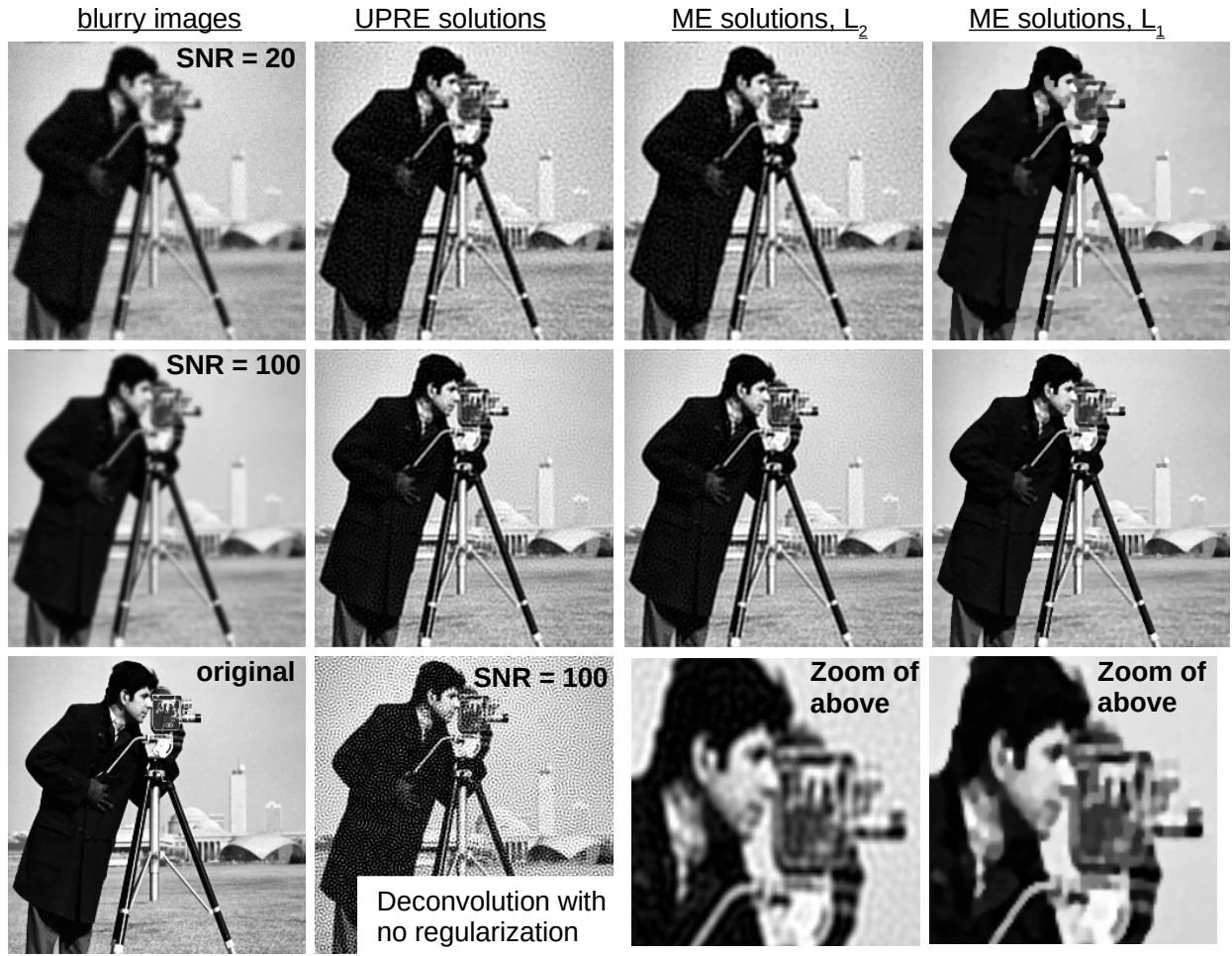}
 \caption{Comparisons of deconvolution with $\ell_2$ regularizations and the projection of the parameters onto the $\ell_1$ regularization.  The standard deviation of the Gaussian point spread function for these images is $\omega = 1.5$.}
 \label{fig: cameraman}
\end{figure}

\FloatBarrier
\section{Accelerated Iterations for Denoising and Deconvolution}\label{sec: acc}
In this section we show how to significantly reduce the computational load for our iterative scheme for the popular denoising and deconvolution applications, and in the appendix these derivations are easily extended to Fourier reconstruction problems (e.g. SAR and MRI).  It is achieved by obtaining exact formulas for the traces appearing in (\ref{eq: exp1}) and (\ref{eq: exp2}) and the solutions to (\ref{eq: tik}) that require only one inverse FFT and multiplication by a diagonal matrix in each iteration. {\TLS{Moreover, the FFT may be avoided if one is satisfied with the leaving the iterated solutions in the Fourier domain, leaving us with only $O(n)$ flop count for each iteration.}} The derivations needed for 1D are provided here, and extensions to higher dimensions and Fourier sampling are provided in the appendix.

The case of denoising occurs when the sampling matrix $A$ is the identity and the data vector is a noisy version of the image given by say $\tilde u$.  More generally, we consider the deconvolution problem, where the sampling matrix is circulant.  These denoising and deconvolution problems are written as
\begin{equation}\label{eq: tik-deconv}
 \min_{u} \|C u - \tilde u\|_2^2 + \lambda \| Tu \|_2^2,
\end{equation}
where $C$ is a circulant matrix and $\tilde u$ is a noisy and/or blurred version of $u$. 

In what follows, formulas for the traces needed for (\ref{eq: iter1}) and (\ref{eq: iter2}) are derived analytically by determining and summing the eigenvalues.  First observe that circulant matrices are diagonalizable by the unitary discrete Fourier transform\footnote{This can be seen as a direct result of the Fourier convolution theorem.} denoted by $\mathcal F$, and hence for this calculation we consider circulant regularization matrices $T = T_r$ of the form
\begin{equation}\label{1D-matrices}
 T_1 = \left(
 \begin{array}{ccccc}
  -1 & 1 & 0 & \dots & 0\\
  0 & -1 & 1 & \dots & 0\\
  \vdots & & \ddots & & \vdots\\
  1 & 0 & 0 & \dots & -1
 \end{array}
 \right) 
 ~ \text{and} \quad
 T_2 = \left(
 \begin{array}{ccccc}
  1 & -2 & 1 & \dots & 0\\
  0 & 1 & -2 & \dots & 0\\
  \vdots & & \ddots & & \vdots\\
  -2 & 1 & 0 & \dots & 1
 \end{array}
 \right) .
\end{equation}
Notice in general we have $T_r = T_1^r$.  
We write the diagonalization of $T_r$ and $C$ as 
\begin{equation}\label{new-diag}
\begin{split}
T_r & = \mathcal F^{-1} \Lambda_r \mathcal F\\
C & = \mathcal F^{-1} \Lambda_C \mathcal F,
\end{split}
\end{equation}
where $\Lambda_r$ and $\Lambda_C$ contain the eigenvalues of $T_r$ and $C$ respectively.   The eigenvalues in the diagonal matrices $\Lambda_r$ and $\Lambda_C$ may be evaluated by simply taking the discrete Fourier transforms (DFT) of the first column of $T_r\T$ and $C\T$, respectively.

{\TLS{These diagonalized forms allow us to analytically derive simple formulas for the traces and evaluate them very cheaply.}} Using (\ref{new-diag}), observe the expression for the matrix products within the traces in (\ref{eq: iter1}) and (\ref{eq: iter2}) as
\begin{equation}\label{eq: trace1}
\begin{split}
 H^{-1} C\T C & = \mathcal F^{-1} ( |\Lambda_C|^2 + \lambda |\Lambda_r|^2)^{-1} |\Lambda_C|^2 \mathcal F \\\
  H^{-1} T_r\T  T_r & = \mathcal F^{-1} ( |\Lambda_C|^2 + \lambda |\Lambda_r|^2)^{-1} |\Lambda_r|^2 \mathcal F .
\end{split}
\end{equation}
Therefore, $H^{-1} C\T C$ and $H^{-1} T_r\T T_r$ are also diagonalized by the Fourier transform and also circulant, and their eigenvalues are given by the elements of the diagonal matrices $( |\Lambda_C|^2 + \lambda |\Lambda_r|^2)^{-1} |\Lambda_C|^2 $ and $( |\Lambda_C|^2 + \lambda |\Lambda_r|^2)^{-1} |\Lambda_r|^2 $ respectively.  The eigenvalues for $C$ will depend on the specific convolution operator, and in the case of denoising these eigenvalues are all obviously 1.  In general, we denote these eigenvalues by $\gamma_j(C) = \sum_{k=1}^n c_{1,k} \rme^{-\rmi 2\pi j (k-1) /n}$, for $j=0,1,\dots , n-1$.  For $T_r$, we have the exact expression for these eigenvalues as
\begin{equation}\label{eq: eig1d}
\gamma_{j}(T_r) =  \gamma_{j}(T_1^r) 
= (\e^{-\rmi 2\pi j/n} -1)^r,
\end{equation}
for $j=0,1,\dots,n-1$, and therefore $|\gamma_j(T_r) |^2 = 4^r \sin^{2r} (\pi j/n)$.
 Then the traces needed for our algorithm are given by
\begin{align}
 \text{trace}(H^{-1} C\T C ) & = \sum_{j=0}^{n-1} \left( |\gamma_j(C)|^2 + \lambda 4^r \sin^{2r}(\pi j/n)\right)^{-1} |\gamma_j(C)|^2  \label{eq: trace3} \\
  \text{trace}(H^{-1} T_r\T T_r ) & = \sum_{j=0}^{n-1} \left( |\gamma_j(C)|^2 + \lambda 4^r \sin^{2r}(\pi j/n)\right)^{-1} 4^r \sin^{2r}(\pi j/n) \label{eq: trace4},
\end{align}
which are reevaluated very cheaply for each update on $\lambda$.

{\TLS{This handles the traces needed for (\ref{eq: iter1}) and (\ref{eq: iter2}), and next we need to efficiently evaluate the numerators, $\| Cu_\lambda -\tilde u \|_2^2$ and $\| T_r u_\lambda \|_2^2$.  Once again using the diagonalized form of $T_r$, observe the latter norm may be written as
\begin{equation}\label{new-norm}
\| Tu_\lambda \|_2^2 = u_\lambda\T \mathcal F^{-1} | \Lambda_r |^2 \mathcal F u_\lambda .
\end{equation}
Leveraging Parceval's theorem with the diagonalized form of $C$ into the first norm leads to
\begin{equation}\label{new-norm2}
\begin{split}
\| C u_\lambda - \tilde u \|_2^2 
&= \| \mathcal F (Cu_\lambda - \tilde u )\|_2^2 \\
& = \| \Lambda_C \mathcal Fu_\lambda - \mathcal F \tilde u \|_2^2 .
\end{split}
\end{equation}
From (\ref{new-norm}) and (\ref{new-norm2}), observe that with the availability of the Fourier transform of $u_\lambda$ these norms may be computed by multiplication with a diagonal matrix followed by a dot product.  Using the diagonalized representations once more the solution to (\ref{eq: tik-deconv}) is written as
\begin{equation}\label{eq: tik-exact}
u_\lambda = \mathcal F^{-1} D_{C,r} \F \tilde u ,
\end{equation}
where $D_{C,r} = (|\Lambda_C|^2 + \lambda |\Lambda_r|^2)^{-1} \overline{\Lambda_C}$ is a diagonal matrix \footnote{Observe that (\ref{eq: tik-exact}) is just a particular form of a Wiener filter}.  Therefore the Fourier transform of $u_\lambda$ is obtained by only multiplying $\F \tilde u$ by a diagonal matrix, and $\F \tilde u $ only needs to be precomputed.  Moreover, the inverse Fourier transform in (\ref{eq: tik-exact}) needed to obtain $u_\lambda$ only needs to be evaluated following the iterations, so that only one initial FFT is needed to evaluate $\F \tilde u$ prior to the iterations and one final inverse FFT to evaluate the solution following the iterations. }}
A pseudo algorithm summarizing these ideas is provided in Algorithm \ref{algo-fast}.

\begin{algorithm}[!ht]
\caption{: Fast Automated Deconvolution}
\label{algo-fast}
\begin{algorithmic}[1]
\STATE{Inputs: $\tilde u$, $T_r$, $C$, $\lambda_0$.}
\STATE{Evaluate ${\tilde u_\F} = \F \tilde u$ using an FFT.}
\FOR{$k$=0 \TO K}
\STATE{Define $H_k = C\T C + \lambda_k T_r\T T_r$.}
\STATE{Evaluate the Fourier transform of the solution requiring only multiplication by a diagonal matrix: $\F u_k^\ast =  (| \Lambda _C|^2 + \lambda_k |\Lambda_r|^2)^{-1} \overline{\Lambda_C}  \tilde u_\F$}
\STATE{Evaluate $\text{trace}(H_k^{-1}C\T C)$ and $ \text{trace}(H_k^{-1}T_r\T T_r) $ by (\ref{eq: trace3}) and (\ref{eq: trace4}).}
\STATE{Evaluate $\| C u_k^* - \tilde u\|_2^2$ and $\| Tu_k^* \|_2^2$ using only the $\F u_k^*$ with (\ref{new-norm}) and (\ref{new-norm2}).}
\STATE{Set $\sigma_{k+1}^2 = \| C u_k^\ast - \tilde u \|_2^2/(n- \text{trace}(H_k^{-1} C\T C)) $ and $\eta_{k+1}^2 = \| T_r u_k^\ast \|_2^2 /(n- \lambda_k \text{trace}(H_k^{-1}T_r\T T_r) )$.}
\STATE{Set $\lambda_{k+1} = \sigma_{k+1}^2 / \eta_{k+1}^2$.}
\ENDFOR
\STATE{Obtain solution by evaluating inverse Fourier transform of $\F u_K^*$.}
\end{algorithmic}
\end{algorithm}

{\TLS{To formally point out the computational load of this algorithm, recall the forward operator $A\in \R^{m\times n}$, and for deconvolution $m=n$.  Then it can be observed that the main computational load for each main outer loop of the general algorithm presented in Algorithm \ref{algo-main} requires $O(n^2 N(J+1))$ flops, where $J$ is the number of random vectors used and $N$ is the number of iterations used in the CG solver.  Alternatively, the main computational load for each loop in Algorithm \ref{algo-fast} is given by dot products and multiplication by diagonal matrices, which are only $O(n)$ operations, providing massive reduction in computation compared with the general formulation.

Average run times (in MATLAB) for determining the optimal parameters for deconvolution using this fast approach are provided in table \ref{table1}, along with the time needed to then compute the optimal $\ell_1$ solution from the determined parameters using the state-of-the-art ADMM approach \cite{boyd2011distributed}.  These timed simulations were taken from the simulated results in Figures \ref{fig: numerical-deconv}.  The convergence for each simulation was based on the relative change in the iterated solution reducing to less than a fixed tolerance, $10^{-4}$.  Observe that the time required to determine the parameters is roughly 12 to 30 times faster than needed to compute the $\ell_1$ solution.  Considering this along with the plots in Figure \ref{fig: numerical-deconv}, this approach appears to very practical for either extremely fast optimized deconvolution with Tikhonov regularization, or for simply choosing the parameters for the superior $\ell_1$ model with practically no additional computation time.
}}


{\toby{The analytical extensions of these concepts to 2D problems and Fourier sampling are provided in the appendix.  Moreover, convergence analysis of algorithm 2 in the denoising case is given in section \ref{sec: conv}, with the detailed proofs provided in the appendix.}} 

\begin{table*}[h!]
\begin{center}
\begin{tabular}{ c | c | c | c | }
	 image name & pixel count & $\ell_2$ parameter time & $\ell_1$ reconstruction time \\
            \hline 
            cameraman  & 256x256 & 0.0391  & 0.4997 \\
            Shepp-Logan  & 256x256 & 0.0338  & 0.7862 \\
            peppers  & 384x512 & 0.0822  & 2.3706 \\
            \hline
\end{tabular}
\end{center}
\caption{Average computational run time (in seconds) to determine the optimal parameters for deconvolution and to then compute the corresponding $\ell_1$ reconstruction from the determined parameters.}
\label{table1}
\end{table*}

\FloatBarrier
\section{Convergence Analysis of Algorithm 2} \label{sec: conv}
In this section some asymptotic and convergence analysis of algorithm 2 is provided in the denoising case, $C = I$.  The detailed proofs of our claims are given in the appendix. 
\begin{prop} For a general circulant matrix $T$ with eigenvalues $\{ \gamma_j \}_{j=0}^{n-1}$ used for regularization, the fixed point iteration in Algorithm 2 can be represented by 
$$\lambda_{k+1} = f(\lambda_k; \tilde u,T),$$
where
\begin{equation}\label{eq:alg2A}
 f(\lambda; \tilde u,T) = \lambda \frac{\||\Lambda|^2 B(\lambda)\hat{u}\|_2^2}%
{\||\Lambda| B(\lambda)\hat{u}\|_2^2}
 \frac{\tr(B(\lambda))}{\tr(|\Lambda|^2 B(\lambda))}
\end{equation}
where $B(\lambda) = (I +\lambda|\Lambda|^2)^{-1}$ and $\hat u = \mathcal{F} \tilde u$.

\label{prop:alg2}
\end{prop}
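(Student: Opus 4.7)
The plan is to specialize every ingredient of Algorithm~\ref{algo-fast} to the denoising case $C=I$ (so $\Lambda_C=I$) and then to pass to the Fourier domain using the diagonalization $T = \mathcal{F}^{-1}\Lambda \mathcal{F}$ and the unitarity of $\mathcal{F}$. Writing $\hat u = \mathcal{F}\tilde u$, the operator $H_k = I + \lambda_k T\T T$ becomes $H_k = \mathcal{F}^{-1}(I+\lambda_k|\Lambda|^2)\mathcal{F}$, so $H_k^{-1} = \mathcal{F}^{-1}B(\lambda_k)\mathcal{F}$ with $B(\lambda)=(I+\lambda|\Lambda|^2)^{-1}$. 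From (\ref{eq: tik-exact}), $u_k^\ast = \mathcal{F}^{-1}B(\lambda_k)\hat u$, which already shows that the entire state of the iteration at step $k+1$ depends on the previous iterates only through $\lambda_k$; this is the structural claim behind writing the iteration as $\lambda_{k+1} = f(\lambda_k;\tilde u,T)$.

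Next I would evaluate the four quantities that appear in the algorithm in this Fourier picture. Using $I - B(\lambda) = \lambda|\Lambda|^2 B(\lambda)$ together with Parseval's identity gives
\[
\|u_k^\ast - \tilde u\|_2^2 = \|(I-B(\lambda_k))\hat u\|_2^2 = \lambda_k^{2}\,\||\Lambda|^2 B(\lambda_k)\hat u\|_2^2,
\]
and, since $Tu_k^\ast = \mathcal{F}^{-1}\Lambda B(\lambda_k)\hat u$,
\[
\|Tu_k^\ast\|_2^2 = \||\Lambda|B(\lambda_k)\hat u\|_2^2.
\]
Invariance of trace under conjugation by $\mathcal{F}$ likewise gives $\tr(H_k^{-1}) = \tr(B(\lambda_k))$ and $\tr(H_k^{-1}T\T T)=\tr(|\Lambda|^2 B(\lambda_k))$.

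The key algebraic observation is the identity
\[
n = \tr(B(\lambda)) + \lambda\,\tr(|\Lambda|^2 B(\lambda)),
\]
which is immediate from $I = B(\lambda) + \lambda|\Lambda|^2 B(\lambda)$ after taking traces. Consequently the two denominators in (\ref{eq: iter1})--(\ref{eq: iter2}) collapse cleanly:
\[
n - \tr(B(\lambda_k)) = \lambda_k\tr(|\Lambda|^2 B(\lambda_k)),\qquad n - \lambda_k\tr(|\Lambda|^2 B(\lambda_k)) = \tr(B(\lambda_k)).
\]
Substituting these into the expressions for $\sigma_{k+1}^2$ and $\eta_{k+1}^2$ yields
\[
\sigma_{k+1}^2 = \frac{\lambda_k\,\||\Lambda|^2 B(\lambda_k)\hat u\|_2^2}{\tr(|\Lambda|^2 B(\lambda_k))},\qquad \eta_{k+1}^2 = \frac{\||\Lambda|B(\lambda_k)\hat u\|_2^2}{\tr(B(\lambda_k))}.
\]

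The conclusion then follows by forming the ratio $\lambda_{k+1}=\sigma_{k+1}^2/\eta_{k+1}^2$, which after rearrangement matches (\ref{eq:alg2A}) exactly. There is no genuine obstacle here; the only point that requires a little thought is the trace identity $n = \tr(B(\lambda)) + \lambda\,\tr(|\Lambda|^2 B(\lambda))$, which is what makes the two separate updates for $\sigma^2$ and $\eta^2$ collapse into a single scalar fixed-point map in $\lambda$.
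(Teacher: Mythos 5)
Your proposal is correct and follows essentially the same route as the paper: both reduce the two updates for $\sigma^2$ and $\eta^2$ to a single map in $\lambda$ via the identities $I - H^{-1} = \lambda H^{-1}T\T T$ and $I - \lambda H^{-1}T\T T = H^{-1}$ (your trace identity $n = \tr(B(\lambda)) + \lambda\tr(|\Lambda|^2 B(\lambda))$ is exactly these, taken in the Fourier domain), then diagonalize by $\mathcal{F}$ and invoke Parseval. The only difference is cosmetic — the paper manipulates at the matrix level before diagonalizing, while you diagonalize first.
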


The following proposition gives further insight on the behavior of $f$.

\begin{prop} Suppose $\tilde u \neq 0$. The fixed point iteration function $f$ defined in \eqref{eq:alg2A} satisfies:
\begin{enumerate}
\item For $T=I$, $f(\lambda,\tilde u,I) = \lambda$ and every real number is a fixed point.
\item Zero is a fixed point of $f$. Moreover, zero is a stable fixed point if 
$$
f'(0;\tilde u,T)=\frac{ n}{\sum_{j=0}^{n-1}  |\gamma_j|^2}\frac{\|T\T T \tilde u\|^2}{\|T \tilde u\|^2 } <1.
$$

\item For $T=T_r$ with $r>0$ as in (\ref{1D-matrices}), the asymptotic behavior of $f$ as $\lambda\to \infty$  is given by 
$$
f(\lambda;\tilde u, T_r) \sim  \lambda^2 \kappa_{\infty}(\tilde u,T_r), 
$$
$$
\kappa_{\infty}(\tilde u,T_r)  = \frac{ \sum_{j=1}^{n-1} |\hat{u}_j|^2 }{(n-1) \sum_{j=1}^{n-1} |\hat{u}_j|^2/ |\gamma_j|^2},
$$
and $ 4^r\sin^{2r}(\pi/n)/(n-1) \le \kappa_{\infty}(\tilde u,T_r)\le4^r/(n-1)$.
\end{enumerate}
\label{prop:PropertiesAlg2}
\end{prop}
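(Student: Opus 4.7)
The plan is to rewrite $f$ entirely in the eigenbasis of $T$ and then read each claim off the resulting scalar expression. Since $B(\lambda)$ is diagonal with entries $b_j(\lambda) = 1/(1+\lambda|\gamma_j|^2)$, every object in the definition of $f$ reduces to an explicit sum:
\[
\||\Lambda|^k B(\lambda)\hat u\|_2^2 = \sum_{j=0}^{n-1} |\gamma_j|^{2k}\, b_j(\lambda)^2 |\hat u_j|^2,
\qquad
\tr(|\Lambda|^{2k} B(\lambda)) = \sum_{j=0}^{n-1} |\gamma_j|^{2k}\, b_j(\lambda).
\]
After this reduction, the three claims become elementary algebra or an order-of-magnitude estimate in $\lambda$.

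Part (1) is immediate: if $T=I$ then $|\gamma_j|^2 \equiv 1$, so both the norm ratio and the trace ratio in $f$ equal $1$ identically and $f(\lambda) = \lambda$. For part (2), the explicit prefactor $\lambda$ in the definition of $f$ gives $f(0)=0$ provided the remaining ratio is finite at $\lambda=0$ (which is where the implicit assumption $T\tilde u \neq 0$ enters). Writing $f(\lambda) = \lambda g(\lambda)$, I would compute $f'(0)=g(0)$ by substituting $B(0)=I$, giving $g(0) = \bigl(\sum_j |\gamma_j|^4|\hat u_j|^2 / \sum_j |\gamma_j|^2 |\hat u_j|^2\bigr)\bigl(n/\sum_j |\gamma_j|^2\bigr)$. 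Applying Parseval through the diagonalization $T=\mathcal F^{-1}\Lambda\mathcal F$ identifies $\sum_j |\gamma_j|^{2k}|\hat u_j|^2$ with $\|(T\T T)^{k/2}\tilde u\|_2^2$, which matches the displayed formula for $f'(0;\tilde u,T)$; the usual stability criterion $|f'(0)|<1$ for a fixed point then gives the stated condition (noting $f'(0)\geq 0$).

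Part (3) is the most delicate step. The essential structural fact is that $T_r$ has the single zero eigenvalue $\gamma_0 = (e^{0}-1)^r = 0$, while $|\gamma_j|^2 = 4^r \sin^{2r}(\pi j/n) > 0$ for $j=1,\dots,n-1$. I would split off the $j=0$ term and expand $b_j(\lambda) \sim 1/(\lambda|\gamma_j|^2)$ for $j \geq 1$, yielding the leading-order behaviors $\tr(B(\lambda)) \to 1$, $\tr(|\Lambda|^2 B(\lambda)) \sim (n-1)/\lambda$, $\||\Lambda|B\hat u\|_2^2 \sim \lambda^{-2}\sum_{j\geq 1}|\hat u_j|^2/|\gamma_j|^2$, and $\||\Lambda|^2 B\hat u\|_2^2 \sim \lambda^{-2}\sum_{j\geq 1}|\hat u_j|^2$. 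The $\lambda^{-2}$ in the norm ratio cancels, the trace ratio contributes a factor $\lambda/(n-1)$, and combining with the outer $\lambda$ gives $f(\lambda) \sim \lambda^2 \kappa_\infty(\tilde u,T_r)$ in the claimed form. The two-sided bound on $\kappa_\infty$ then follows by applying the elementary estimates $4^r\sin^{2r}(\pi/n) \leq |\gamma_j|^2 \leq 4^r$ for $j=1,\dots,n-1$ to the denominator in $\kappa_\infty$.

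The main obstacle I anticipate is the bookkeeping in part (3): the index $j=0$ contributes the surviving $O(1)$ piece to $\tr(B)$ but nothing at all to $\tr(|\Lambda|^2 B)$ or to either norm, and this asymmetry is precisely what produces the $\lambda^2$ growth rather than a cancellation. Once this single index is handled carefully and the diagonal representation is in place, everything else amounts to tracking powers of $\lambda$.
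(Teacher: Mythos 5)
Your proposal is correct and follows essentially the same route as the paper's proof: diagonalize everything in the Fourier eigenbasis of $T$, obtain $f'(0)=\lim_{\lambda\to 0}f(\lambda)/\lambda$ by setting $B(0)=I$, and get the quadratic growth by isolating the zero eigenvalue $\gamma_0=0$ and using $\lambda B(\lambda)\to|\Lambda_r|^{-2}$ (equivalently your termwise $b_j(\lambda)\sim 1/(\lambda|\gamma_j|^2)$) on the remaining modes, with the two-sided bound on $\kappa_\infty$ coming from $4^r\sin^{2r}(\pi/n)\le|\gamma_j|^2\le 4^r$. Your explicit flagging of the implicit requirement $T\tilde u\neq 0$ in part (2) is a small point the paper leaves unstated.
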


Figure~\ref{fig:Alg2R2} illustrates features of $f$ in light of Proposition~\ref{prop:PropertiesAlg2}. Here the input signal was generated by adding Gaussian noise to the piecewise constant function shown in the right frame. In this example, SNR $= 5$ and second order ($r=2$) regularization was used. Notice that $f(\cdot, \tilde u,T_r)$ has 3 fixed points: $0$, $\lambda_{opt}\approx 16.5$, and $\lambda_\infty \approx 1/\kappa_{\infty}(\tilde u,T_r)$. As typically the case, the trivial fixed point $\lambda = 0$ is unstable -- a fact that can easily be verified using the second part of Proposition~\ref{prop:PropertiesAlg2}. A second unstable fixed point can be observed in the asymptotic regime $f(\lambda;\tilde u, T_r) \sim   \kappa_{\infty}(\tilde u,T_r)y^2$, since $1/\kappa_{\infty}(\tilde u,T_r)$ solves the equation $\kappa_{\infty}(\tilde u,T_r)\lambda^2 = \lambda$. Finally $\lambda_{opt}\approx 16.5$ is the only stable fixed point. The middle frame in Figure~\ref{fig:Alg2R2} shows that $f$ is a contraction in a neighborhood of this value. The reconstruction using $\lambda_{opt}$ is given in right frame of this figure.

\begin{figure}[ht]
\centerline{\includegraphics[width=16cm]{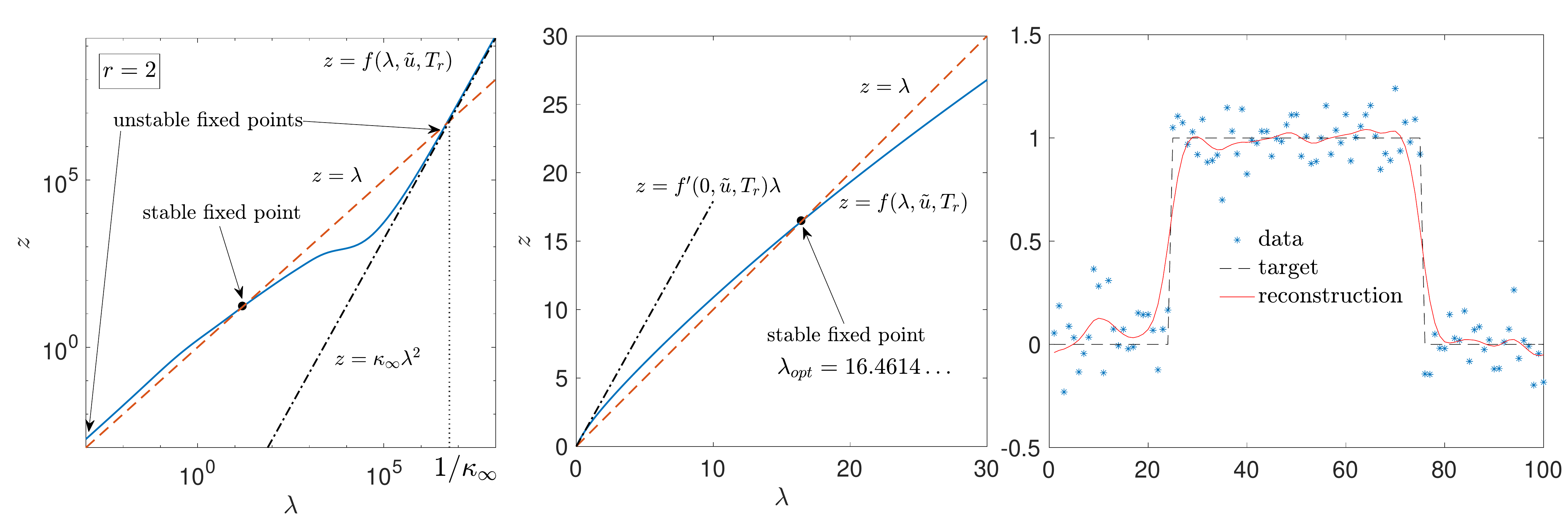}}
\caption{From left to right: a loglog plot of the fixed point iteration function for $r=2$; a linear plot of of the same function showing the fixed points 0 and $\lambda_{opt}$; and plots of the corresponding data $\tilde u$,  the target reconstruction, and the denoised signal using $\lambda_{opt}$.}
\label{fig:Alg2R2}
\end{figure}

We point out that there are instances in which $\lambda =0$ is a stable fixed point. According to Proposition~\ref{prop:PropertiesAlg2}, this happens when 
$$
\frac{\|T_r\T T\tilde u\|^2}{\|T_r \tilde u\|^2 } <\frac{1}{n}\sum_{j=0}^{n-1}  |\gamma_j(T_r)|^2
$$ 
This scenario is illustrated in Figure~\ref{fig:Alg2R2B} (left) with $r=1$. In this example, SNR $= 10$ and the target function is piecewise quadratic, making the ratio $\frac{\|T_2\tilde u\|^2}{\|T_1 \tilde u\|^2}$ smaller than 
$$
\frac{1}{ n}\sum_{j=0}^{n-1}  |\gamma_j(T_1)|^2 =  2.
$$
Notice that here we used the identity
$$
\frac{1}{n}\sum_{j=0}^{n-1}  |\gamma_j(T_r)|^2  = \frac{4^r }{\pi} \int_0^\pi \sin^{2r}(x)dx = 4^r  \frac{(2r-1)(2r-3)\dots 1}{(2r)(2r-2)\dots 2},
$$
which holds for $n>2r$ by exactness of the trapezoidal quadrature rule. Given the large SNR and the inadequate prior in this case, our algorithm favors forfeiting regularization.  This scenario is avoided when a better prior is specified. Figure~\ref{fig:Alg2R2B}  shows that, for the same data and $r=2$, $\lambda_{opt} \approx 2.47 $ is returned by the fixed point iteration. The corresponding denoised signal is presented in the right frame of Figure~\ref{fig:Alg2R2B}.

\begin{figure}[ht]
\centerline{\includegraphics[width=16cm]{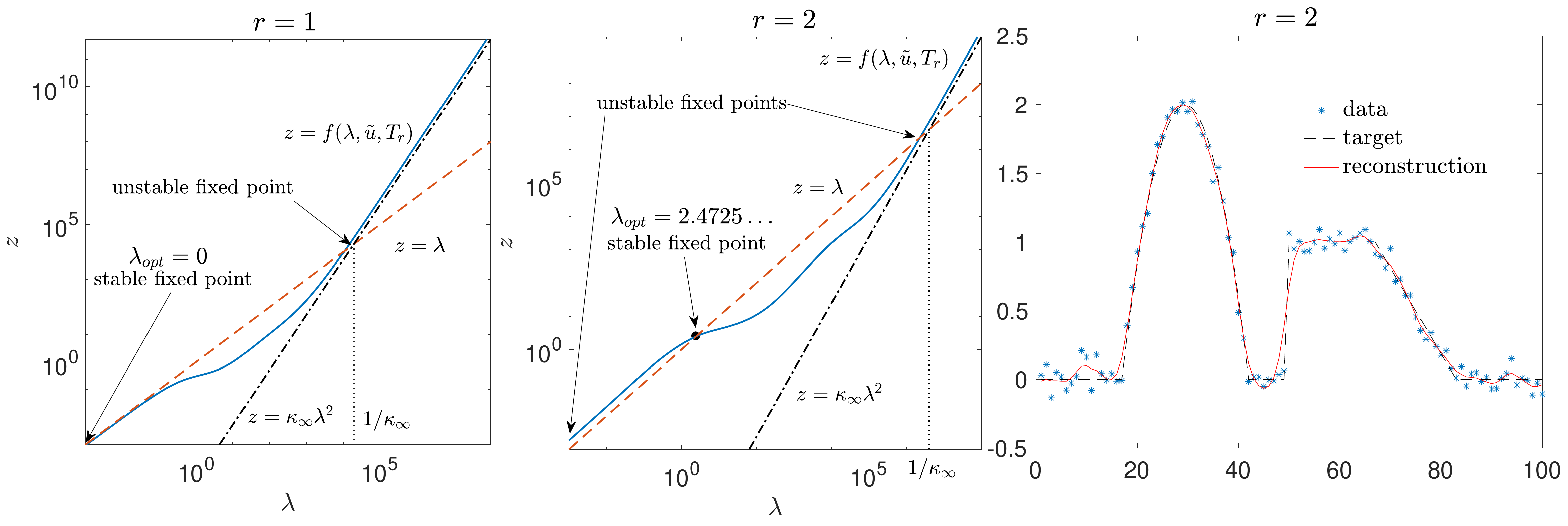}}
\caption{From left to right:  a loglog plot of the fixed point iteration function for $r=1$ showing 0 as the stable fixed point; a loglog plot of the fixed point  iteration function for $r=2$ showing a nonzero stable fixed point; and plots of the corresponding data $\tilde u$,  the target reconstruction, and the denoised signal using $r=2$ and $\lambda_{opt}$.}
\label{fig:Alg2R2B}
\end{figure}

\section{Conclusions}
This article considers the question of choosing
the weighting parameter for a regularization term in a least-squares problem.
One way to proceed is to take a Bayesian perspective and express the problem
as that of finding the maximum a posteriori (MAP) solution.
Proceeding in this way introduces, as a second parameter, the variance
of the Gaussian noise associated with the data of the least-squares term.
Then, one way of choosing these two parameters
is to maximize the probability of the data given the two parameters, which
here it is called the maximum evidence (ME) value.

We demonstrate the effectiveness of ME for image reconstruction problems in several instances
by comparison with UPRE.
Whereas,
the UPRE value assumes that the variance of the noise in the data is given,
the ME value
is obtained by determining the variance of the noise in the data automatically.
Simple test examples demonstrate the accuracy of the ME estimate for the variance of the noise.
The tests also show decidedly better reconstructions for ME than for UPRE
for certain types of data and regularizations, though we do not make any general claims concerning the relative merits of the two approaches.
The automatic choice of parameters is also tested for tomographic
imaging {\TLS{and deconvolution}} with good results.  

Also presented is an apparently novel iterative scheme,
and its efficient implementation, for determining the
ME values in the case of an $\ell_2$ regularization term.
Empirical evidence is presented demonstrating both rapid convergence
and insensitivity to an initial guess for the parameters.
The general algorithm requires repeated computationally expensive estimates of matrix traces.
At the same time, for many important inverse problems, including denoising, MRI, and deconvolution,
the trace and corresponding iterative estimates of the parameters can be calculated analytically with very little computation. In this case, the algorithm is extremely efficient and practical.  

Finally, it is shown how the Bayesian perspective facilitates
a very reasonable choice for the weighing parameter in the case
of $\ell_1$ regularization.
Experimental results
for both simple test examples and {\TLS{image deconvolution}}
give reconstructions of high quality
(and confirm the general superiority of $\ell_1$ regularization).  {\TLS{In the case of deconvolution, it is demonstrated that these $\ell_1$ parameters are \emph{nearly} optimal, and are determined at almost no notable computational expense to the $\ell_1$ solution.  This observation significantly broadens the practical application of the $\ell_2$ parameter selection work developed here.  Moreover, this approach may be trivially generalized to select parameters for the $\ell_1$ problem based on UPRE, whenever the noise level is known \emph{a priori}.}}

\begin{appendices}
\section{Extension of Algorithm 2 to 2D Image Denoising} \label{sec: 2Dext}
In the case of 2D images $u\in\R^{n^2}$, we consider regularizers that compute finite differences analogous to (\ref{1D-matrices}) along all rows and columns of the 2D image.  We write the order $k$ operator as 
$$
T_r^{2D} = \left[
\begin{array}{c}
T_{r,x} \\
T_{r,y}
\end{array}
\right],
$$
and we obtain the matrix $H = I + \lambda(T_{r,x}\T T_{r,x} + T_{r,y}\T T_{r,y})$.  The operators $T_{r,x}$ and $T_{r,y}$ are visualized in Figure \ref{fig: t-matrix} for the case $r=1$ and $n=4$.

\begin{figure}[ht]
\centering
 \includegraphics[width=.4\textwidth]{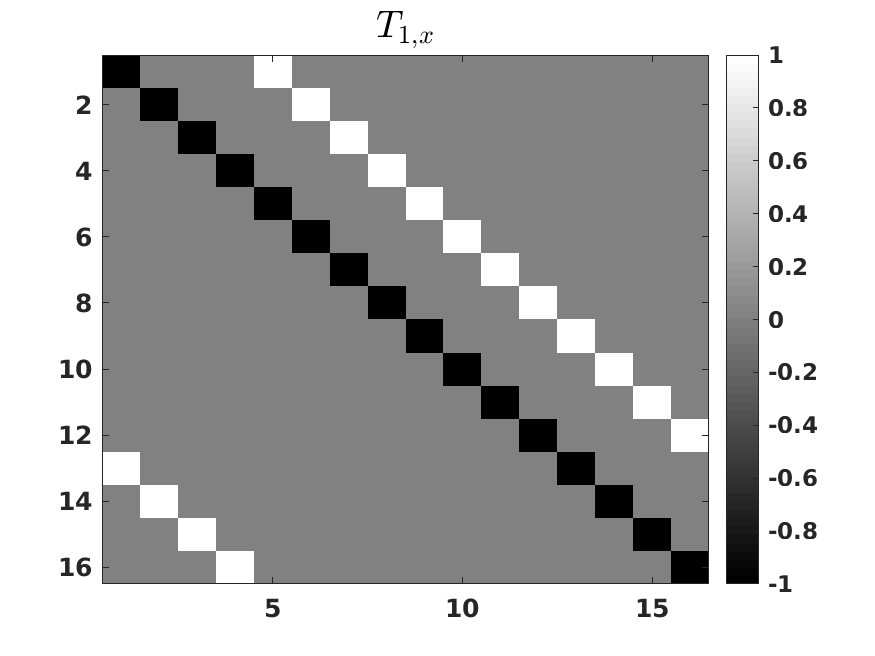}
 \includegraphics[width=.4\textwidth]{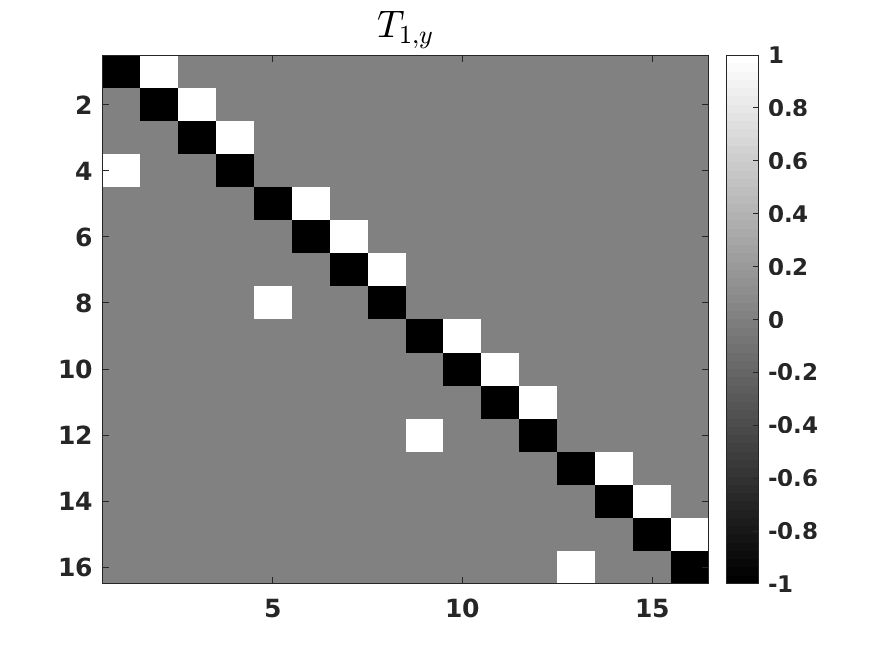}
 \caption{Visualization of the 2D $T_r$ operators for $r=1$ and $n=4.$}
 \label{fig: t-matrix}
\end{figure}

To determine the exact trace of $H^{-1}$ and the exact solution for (\ref{eq: tik}) using only FFT's, we will analytically determine the eigenvalues of $T_{r,x}\T T_{r,x} + T_{r,y}\T T_{r,y}$.  We first observe that these matrices have the following Kronecker product representation:
\begin{equation}\label{eq: kron}
T_{r,x} = T_r \otimes I, \quad T_{r,y} = I\otimes T_r,
\end{equation}
where $T_r$ is the 1D version of the difference operators as written in (\ref{1D-matrices}) and $I$ is the $n\times n$ identity.  This leads to the product representations as $T_{r,x}\T T_{r,x} = T_r\T T_r \otimes I$ and $T_{r,y}\T T_{r,y} = I\otimes T_r\T T_r $.  To find the solution to (\ref{eq: tik}) in this case, combine the Kronecker representation with the unitary Fourier diagonalization of $T_r$ to obtain
\begin{align}
 T_{r,x}\T T_{r,x} & = (\F^{-1} \otimes \F^{-1} )(|\Lambda_r|^2 \otimes I)(\F \otimes \F) ,\\
  T_{r,y}\T T_{r,y} & = (\F^{-1} \otimes \F^{-1} )(I \otimes |\Lambda_r|^2)(\F \otimes \F).
\end{align}
This leads to our expression for $u_\lambda$ in (\ref{eq: tik}) in the case of 2D denoising as
\begin{equation}\label{eq: tiksol2D}
 u_\lambda = (\F^{-1} \otimes \F^{-1} ) \left[ I + \lambda (|\Lambda_r|^2 \otimes I + I\otimes |\Lambda_r|^2)\right]^{-1}(\F \otimes \F) \tilde u 
\end{equation}

Observe that $\F \otimes \F$ and $\F^{-1} \otimes \F^{-1}$ are the 2D unitary discrete Fourier and inverse Fourier transforms, therefore analogous to the 1D case this solution requires two 2D FFTs and product with a diagonal matrix, whose values we determine in what follows.

Using properties of Kronecker products, it can be easily shown that for any two eigenvalues $\gamma_1, \gamma_2$ of $T_r\T T_r$, then $\gamma_1 + \gamma_2$ is an eigenvalue of $T_{r,x}\T T_{r,x} + T_{r,y}\T T_{r,y}$, and therefore the complete set of eigenvalues of this matrix are obtained by considering all such combinations.
Combining this observation with the eigenvalues in (\ref{eq: eig1d}) leads us to the exact trace as
\begin{equation}\label{eq: trace2D}
\text{trace}(H^{-1}) = \sum_{j=0}^{n-1} \sum_{k =0}^{n-1} \left[1 + 4^r\lambda\left( \sin^{2r}(\pi j/n) + \sin^{2r}(\pi k /n) \right) \right]^{-1} ,
\end{equation}
and similarly the trace of $H^{-1} T\T T $ is given by
\begin{equation}\label{eq: trace2D2}
\text{trace}(H^{-1}T\T T) =  \sum_{j=0}^{n-1} \sum_{k=0}^{n-1} \left[4^{-r}+ \lambda\left( \sin^{2r}(\pi j/n) + \sin^{2r}(\pi k /n) \right) \right]^{-1}  \left( \sin^{2r}(\pi j/n) + \sin^{2r}(\pi k/n) \right).
\end{equation}
Finally, the entries of the diagonal matrix needed in evaluation of (\ref{eq: tiksol2D}) coincide with the terms in the sum (\ref{eq: trace2D}).  The arguments used here are very easily extended to higher dimensions, say 3D video denoising.  In addition, they also extend to other circulant regularization matrices, such as the more effective multiscale operators in \cite{sanders2017multiscale}.

\section{Extension of Algorithm 2 to Fourier Sampling}
Consider our reconstruction problem in the case that the sampling matrix is $A = P\F$, where $P$ is a row selector matrix, i.e. the identity with some rows deleted.  In other words, we have some Fourier coefficients of $u$, and let $S\subset \{1,2,\dots, n\}$ denote the indices of those rows of the identity that are in $P$.  Then using some similar arguments as before, the solution to (\ref{eq: tik}) is given by
\begin{equation}\label{eq: tik-Fourier}
 u_\lambda = \F^{-1} (P\T P + \lambda |\Lambda_r|^2)^{-1} \F b.
\end{equation}
Hence, again in this case one only needs two FFTs and a product with a diagonal matrix to obtain the exact solution.  These entries are easily seen once again using (\ref{eq: eig1d}).  Moreover, the traces needed are given by
\begin{equation}
 \text{trace}(H^{-1} A^* A) = \sum_{j\in S} \left( 1+ \lambda 4^r \sin^{2r} (\pi (j-1)/n) \right)^{-1} ,
\end{equation}

\begin{equation}
 \text{trace}(H^{-1} T_r\T T_r ) = \sum_{j=0}^{n-1} \left( 4^{-r} \delta_S (j+1) + \lambda \sin^{2r} (\pi j/n) \right)^{-1}  \sin^{2r} (\pi j/n),
\end{equation}
where $\delta_S (j)=1$ for $j\in S$ and 0 otherwise.  These concepts are extended to 2D and higher dimensions repeating similar arguments from section \ref{sec: 2Dext}.

\section{Proof of Convergence Results}

\begin{proof}[proof of Proposition \ref{prop:alg2}] 
The equations of Algorithm~2, together with $T =\F^{-1}\Lambda\F$, gives
\begin{eqnarray*}
\lambda_{k+1}&=&\frac{\|u^\ast_k - \tilde{u}\|_2^2}{\tr(I - H_k^{-1})}
 \frac{\tr(I - \lambda_k H_k^{-1}T\T T)}{\|T u^\ast_k\|_2^2} \\
 &=&\frac{\|-\lambda_k T\T T H_k^{-1}\tilde{u}\|_2^2}%
{\|T H_k^{-1}\tilde{u}\|_2^2}
 \frac{\tr(H_k^{-1})}{\tr(\lambda_k T\T T H_k^{-1})} \\
 &=&\lambda_k\frac{\||\Lambda|^2 B(\lambda_k)\hat{u}\|_2^2}%
{\||\Lambda| B(\lambda_k)\hat{u}\|_2^2}
 \frac{\tr(B(\lambda_k))}{\tr(|\Lambda|^2 B(\lambda_k))}
\end{eqnarray*}
where $B(\lambda) = (I +\lambda|\Lambda|^2)^{-1}$.
\end{proof}

\begin{proof}[proof of Proposition \ref{prop:PropertiesAlg2}] The first part of the proposition follows directly by substituting $|\Lambda|^2 = I$ into \eqref{eq:alg2A}. We now focus on the two other parts. Again from \eqref{eq:alg2A} it is immediately deduced that 0 is a fixed point of $f$. The stability of this fixed point depends on the derivative of $f$, which is given by

\[\lim_{\lambda\rightarrow 0}\frac{f(\lambda)}{\lambda}
 =\frac{n\||\Lambda|^2\hat{u}\|_2^2}%
{\tr(|\Lambda|^2)\||\Lambda|\hat{u}\|_2^2}
 = n\frac{\|T\T T\tilde{u}\|_2^2}{\tr(|\Lambda|^2)\|T\tilde{u}\|_2^2}.\]

Finally, proving part 3 makes repeated use of the facts $\gamma_0 = 0$ for $r>0$ and $\lim_{\lambda\rightarrow \infty} \lambda B(\lambda) = |\Lambda_r|^{-2}$, where here we use the minimium norm pseudo inverse. To this end, we evaluate the limit:
\begin{align*}
 \lim_{\lambda\rightarrow\infty}\frac{f(\lambda)}{\lambda^2}
&  =\lim_{\lambda\rightarrow\infty}
 \frac{\||\Lambda|^2\lambda B(\lambda)\hat{u}\|_2^2}%
{\||\Lambda|\lambda B(\lambda)\hat{u}\|_2^2}
\frac{\tr(B(\lambda))}{\tr(|\Lambda|^2\lambda B(\lambda))} \\
& =  \frac{ \sum_{j=1}^{n-1} |\hat{u}_j|^2 }{ \sum_{j=1}^{n-1} |\hat{u}_j|^2/ |\gamma_j|^2}
\frac{1}{ n-1 }
= \kappa_{\infty}(\tilde u,T_r),
\end{align*}
It follows that
$$
\frac{\min_{i=1,\dots, n-1}  |\gamma_i|^2 \sum_{j=1}^{n-1} |\hat{u}_j|^2 }{ (n-1)  \sum_{j=1}^{n-1} |\hat{u}_j|^2} \le \kappa_{\infty}(\tilde u,T_r) \le \frac{\max_{i=1,\dots, n-1}  |\gamma_i|^2 \sum_{j=1}^{n-1} |\hat{u}_j|^2 }{ (n-1)  \sum_{j=1}^{n-1} |\hat{u}_j|^2} \\
$$
or
$$
 \frac{4^r\sin^{2r}(\pi/n)}{n-1} \le \kappa_{\infty}(\tilde u,T_r)  \le \frac {4^r}{n-1}.
 $$
\end{proof}

\end{appendices}

\section*{Acknowledgements}
This work is supported in part by the grants NSF-DMS 1502640 and AFOSR FA9550-15-1-0152.


\end{document}